\newcommand{\R}{{\mathbb R}}
\newtheorem{theorem}{Theorem}
\newtheorem{lemma}{Lemma}
\newtheorem{proposition}{Proposition}
\begin{document}

\title[Orbital stability of kinks]{\bf Orbital stability of kinks \\ in the NLS equation with competing nonlinearities}

\author[J. Holmer]{Justin Holmer}
\address[J. Holmer]{Department of Mathematics, Brown University, Providence, RI 02912, USA}

\author[P. G. Kevrekidis]{Panayotis G. Kevrekidis}
\address[P. G. Kevrekidis]{Department of Mathematics and Statistics, University of Massachusetts Amherst, Amherst, MA 01003-4515, USA}
\address[P. G. Kevrekidis]{Department of Physics, University of Massachusetts Amherst, Amherst, 01003, MA, USA}
\address[P. G. Kevrekidis]{Theoretical Sciences Visiting Program, Okinawa Institute of Science and Technology Graduate University, Onna, 904-0495, Japan}
\email{kevrekid@umass.edu}

\author[D. E. Pelinovsky]{Dmitry E. Pelinovsky}
\address[D. E. Pelinovsky]{Department of Mathematics and Statistics, McMaster University, Hamilton, Ontario, Canada, L8S 4K1}
\email{pelinod@mcmaster.ca}

\maketitle

\begin{abstract} 
	Kinks connecting zero and nonzero equilibria in the NLS equation with competing nonlinearities occur at the special values of the frequency parameter. Since they are minimizers of energy, they are expected to be orbitally stable in the time evolution of the NLS equation. However, the stability proof is complicated by the degeneracy of kinks near the nonzero equilibrium. The main purpose of this work is to give a rigorous proof of the orbital stability of kinks. We give details of analysis for the cubic--quintic NLS equation and show how the proof is extended to the general case. 
\end{abstract}

\section{Introduction}
\label{sec-1}

\subsection{Background and motivations}
\label{sec-1-1}

Dispersive models with competing nonlinearities
have been of considerable interest over the
past few years. One of the prominent examples
of this type concerns the so-called cubic-quintic
nonlinearity~\cite{Baldelli2025NoDEA,crasovan}, which is not only
of considerable mathematical interest~\cite{KillipOhPocovnicuVisan2017}, but
also of practical applications in physical 
experiments in nonlinear optics~\cite{FalcaoFilho2013PRL,Reyna2020PRA}.

The study of ultracold atomic
gases has provided further motivation for
the study of such competing focusing and
defocusing nonlinear terms. This concerns
the role of quantum fluctuations competing with mean-field effects, as has
been shown in gases of $^{39}$K~\cite{Luo2021FrontPhys,Mistakidis2023PhysRep}. 
The theoretical works of~\cite{Petrov2015,PetrovAstrakharchik2016},
have identified different types of competing
nonlinear contributions in different dimensions:
a quadratic focusing and a cubic defocusing
in 1D, a logarithmic term encompassing both effects in 
2D and a cubic-quartic model in 3D.

Models with competing nonlinearities have a special coherent structure 
called {\em the kink}, which connects zero and nonzero equilibria \cite{mistakidis2024generic}, 
see also \cite{Susanto2025} in the discrete setting. 
As the frequency parameter is varied, such systems possess two 
monoparametric families of solutions, i.e.,
the so-called droplets (homoclinic to a vanishing background)~\cite{Luo2021FrontPhys}
and the so-called bubbles (homoclinic to a non-vanishing background)~\cite{Katsimiga2023PRA063308}:
the kink arises at the edge between the existence regions 
of these two families and for a {\it unique} value of the frequency. The kink 
behaves partly as a dark soliton due to its heteroclinic form
and partly as a bright soliton as one
of its asymptotes is to a vanishing state. 
This poses a fundamental challenge from 
a mathematical perspective regarding how
to handle the rigorous stability analysis
of the kink. It is the scope
of the present work to establish this
fundamental result, initially for the cubic-quintic case, and subsequently for a general NLS equation with competing nonlinearities. 
Nonlinear stability of kinks in higher spatial dimensions with respect to transversely periodic perturbations, 
suggested in \cite{mistakidis2024generic}, follows from the nonlinear stability of kinks in one spatial dimension.

\subsection{Main result}
\label{sec-1-2}

To simplify the presentation, we formulate the main result for the cubic--quintic NLS equation written in the normalized form
\begin{equation}
\label{NLS}
i \psi_t = \psi_{xx} - \psi + 4 |\psi|^2 \psi - 3 |\psi|^4 \psi.
\end{equation}
The coefficients of the focusing cubic and defocusing quintic nonlinearities  in (\ref{NLS}) have been normalized without loss of generality
due to the scaling transformation, 
which, in turn, leads ---as discussed, e.g., in~\cite{mistakidis2024generic} via
a potential energy landscape analysis---
to the unique selection of the frequency for the kink 
solutions of the cubic--quintic NLS equation, normalized to $1$ in (\ref{NLS}) for convenience.

The cubic--quintic NLS equation (\ref{NLS}) is associated variationally 
with the conserved energy function 
\begin{equation}
\label{energy}
E(\psi) = \int_{\mathbb{R}} \left[ |\psi_x|^2 + |\psi|^2 (1 - |\psi|^2)^2 \right] dx,
\end{equation}
which is defined in the energy space 
\begin{equation}
\label{energy-space}
\mathcal{E} = \{ \psi \in H^1_{\rm loc}(\R) : \;\; 
\psi_x \in L^2(\R), \;\; \psi (1-|\psi|^2) \in L^2(\R)\}.
\end{equation}
The NLS equation (\ref{NLS}) is locally and globally well-posed in $\mathcal{E}$, see Appendix \ref{S:lwp} for a short proof.
Critical points of $E(\psi)$, denoted by $\phi$, satisfy the stationary NLS equation
\begin{equation}
\label{ode}
\phi'' - \phi + 4 |\phi|^2 \phi - 3 |\phi|^4 \phi = 0.
\end{equation}
If $\phi = \phi(x)$ is real-valued, then solutions of the stationary NLS equation (\ref{ode}) are given by the level curves of the first-order invariant $I(\phi,\phi') = (\phi')^2 - \phi^2 (1-\phi^2)^2$ on the phase plane $(\phi,\phi')$. The level $I(\phi,\phi') = 0$ contains a pair of heteroclinic obits from $(0,0)$ to $(1,0)$ and from $(1,0)$ to $(0,0)$ which are referred to as the kinks. In particular, we shall consider the monotonically increasing 
profile $\phi(x) : \mathbb{R} \to \mathbb{R}$ obtained from 
\begin{equation}
\label{first-order}
\phi' = \phi (1-\phi^2) > 0,
\end{equation}
such that $\phi(x) \to 0$ as $x \to -\infty$ and $\phi(x) \to 1$ as $x \to +\infty$. The exact solution to (\ref{first-order}) is
\begin{equation}
    \label{exact-kink}
    \phi(x) = \left( \frac{1}{2} (1 + \tanh(x)) \right)^{1/2},
\end{equation}
where we have set the translational parameter to $0$ without loss of generality.

The kink with the profile $\phi$ is spectrally stable because 
the linearized equations of motion are defined by a pair of self-adjoint
Schr\"{o}dinger operators $\mathcal{L}_{\pm} : H^2(\R) \subset L^2(\R) \to L^2(\R)$ which are positive-definite. Indeed, linearization of 
the NLS equation (\ref{NLS}) about the real-valued profile $\phi$ with the complex-valued perturbation $u+iv$ gives the linearized equations 
\begin{equation}
\label{lin-NLS}
\begin{cases}
u_t = -\mathcal{L}_- v, \\ 
v_t = \mathcal{L}_+ u,
\end{cases} 
\qquad 
\begin{cases}
\mathcal{L}_- = -\partial_x^2 + 1 - 4 \phi^2 + 3 \phi^4, \\
\mathcal{L}_+ = -\partial_x^2 + 1 - 12 \phi^2 + 15 \phi^4.
\end{cases}
\end{equation}
Since $\mathcal{L}_- \phi = 0$ and $\phi(x) \geq 0$ for all $x \in \mathbb{R}$, 
the spectrum of $\mathcal{L}_-$ in $L^2(\mathbb{R})$ is non-negative. 
Since $\mathcal{L}_+ \phi' = 0$ and $\phi'(x) \geq 0$ for all $x \in \mathbb{R}$, the spectrum of $\mathcal{L}_+$ in $L^2(\mathbb{R})$ is 
also non-negative. Hence, $\phi$ is a minimizer of the energy $E(\psi)$. 
In agreement with a general theory of spectral stability \cite[Section 3.5]{GP25}, 
the spectrum of the linearized operator 
\begin{equation}
\label{operators}
\left( \begin{matrix} 0 & -\mathcal{L}_- \\
\mathcal{L}_+ & 0 \end{matrix} \right) : H^2(\R) \times H^2(\R) 
\subset L^2(\R) \times L^2(\R) \to L^2(\R) \times L^2(\R)
\end{equation}
belongs to $i \mathbb{R}$, as is illustrated in Figure \ref{fig:depjustin_f1} by using the 
finite-difference approximations.

\begin{figure}[htbp]
	\includegraphics[width=0.5\textwidth,height=0.25\textheight]{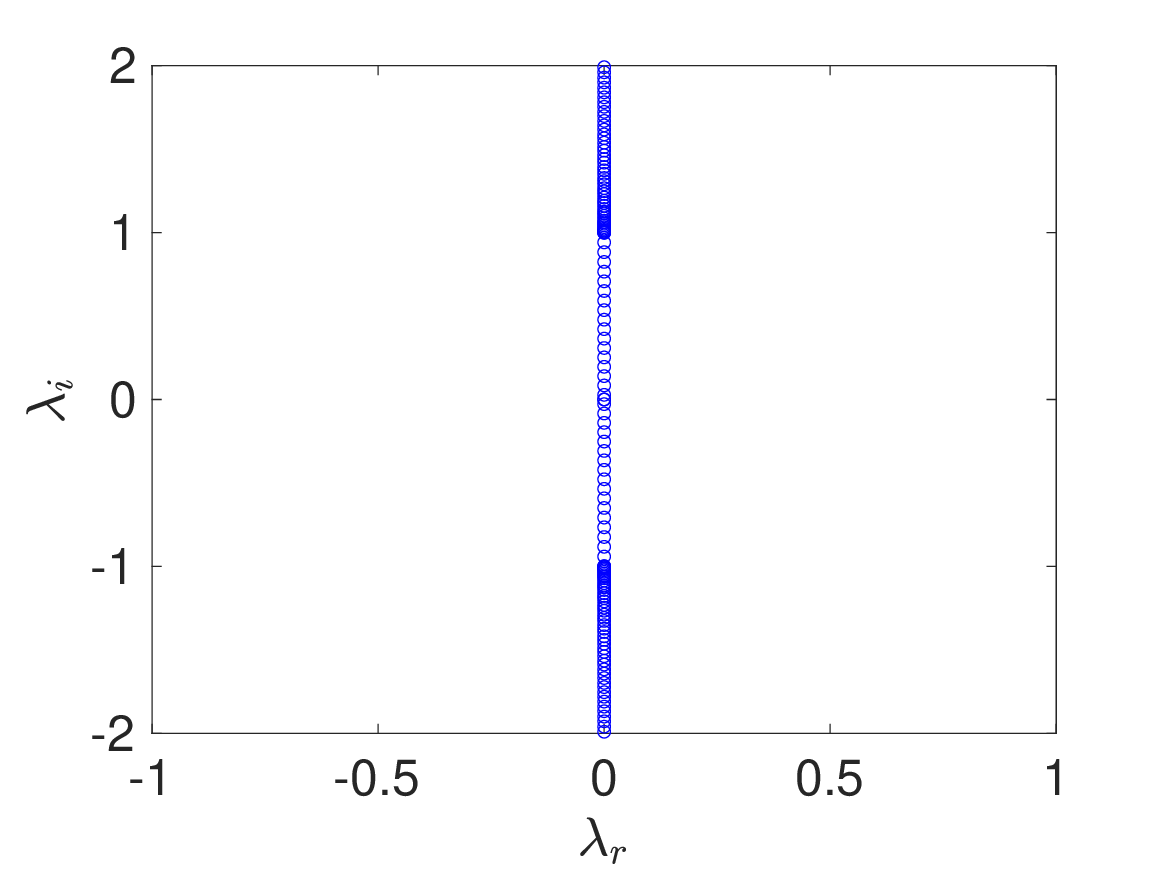}
	\caption{Spectrum of the linearized operator (\ref{operators}) in the complex $\lambda$-plane, where $\lambda=\lambda_r + i \lambda_i$.}
	\label{fig:depjustin_f1}
\end{figure}

The minimizer of energy $E(\psi)$ with the profile $\phi$ is degenerate due to symmetries of the NLS equation (\ref{NLS}) and due to the absence of the spectral gap near the zero eigenvalue. This is seen in Figure \ref{fig:depjustin_f1} because of the nonzero boundary condition $\phi(x) \to 1$ as $x \to +\infty$. The proof of its orbital stability is not a trivial adaptation of the Lyapunov stability theory, see \cite[Chapter 2]{GP25} for a review. Hence, we address 
the main question on how to prove the orbital stability of the kink with respect to perturbations in the energy space $\mathcal{E}$ in (\ref{energy-space}).

Let us introduce the Hilbert space $\mathcal{H}_R$ for a given $R > 0$ according to the inner product 
\begin{equation}
\label{Hilbert-space}
\langle f, g \rangle_{\mathcal{H}_R} := \int_{-\infty}^R f(x) \bar{g}(x) dx + \int_R^{\infty} \frac{1-\phi^2(x)}{1 - \phi^2(R)} f(x) \bar{g}(x) dx
\end{equation}
and the induced norm $\| f \|_{\mathcal{H}_R} = \left( \langle f, f \rangle_{\mathcal{H}_R} \right)^{1/2}$. The form and operator domains
for the linear operators $\mathcal{L}_+$ and $\mathcal{L}_-$ extended to $\mathcal{H}_R$ are defined as $\mathcal{H}^1_R(\R) = \dot{H}^1(\R) \cap \mathcal{H}_R$ and $\mathcal{H}^2_R(\R) = \dot{H}^2(\R) \cap \mathcal{H}_R$ respectively. We also define 
the distance between two elements in $\mathcal{E} \cap \mathcal{H}_R$ as
\begin{equation}
\label{distance}
\rho_R(\psi,\phi) =  \| \psi' - \phi'\|^2_{L^2(\R)} + 
\| \psi - \phi \|_{\mathcal{H}_R}^2 + \| |\psi|^2 - |\phi|^2 \|^2_{L^2(R,\infty)}. 
\end{equation}
The following theorem represents the main result of this study.

\begin{theorem}
	\label{th-main}
	Fix $R > 0$ sufficiently large such that $\phi(R) > \frac{\sqrt{2}}{\sqrt{3}}$. The kink with the profile $\phi$ is orbitally stable in the NLS equation (\ref{NLS}) with respect to perturbations  in $\mathcal{E} \cap \mathcal{H}_R$. To be precise, for every small $\epsilon > 0$ there is $\delta > 0$ such that if $\psi_0 \in \mathcal{E}$ satisfies  $\rho_R(\psi_0,\phi) \leq \delta^2$, then the unique solution $\psi \in C^0(\R,\mathcal{E})$ to the NLS equation (\ref{NLS}) with $\psi |_{t = 0} = \psi_0$ satisfies 
	\begin{equation}
	\label{bound-distance}
	\inf_{\alpha,\beta \in \R} \rho_R\left(e^{i \alpha} \psi(t,\cdot + \beta),\phi \right) \leq \epsilon^2, \quad t \in \R.
	\end{equation}
\end{theorem}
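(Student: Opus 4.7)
The plan is to run a Lyapunov-type orbital stability argument using only the conserved energy $E$ in (\ref{energy}), since the mass $\int_{\mathbb{R}} |\psi|^2\,dx$ diverges for $\psi$ close to $\phi$ in $\mathcal{E}$ and cannot be exploited. For $\psi$ close in $\rho_R$-distance to the group orbit $\{e^{-i\alpha}\phi(\cdot-\beta)\}$, I would apply the implicit function theorem to select $\alpha(t),\beta(t)\in\mathbb{R}$ such that the decomposition $e^{i\alpha(t)}\psi(t,\cdot+\beta(t)) = \phi + u + iv$ (with $u,v$ real-valued) satisfies the orthogonality conditions $\langle u,\phi'\rangle_{L^2}=0$ and $\langle v,\phi\rangle_{\mathcal{H}_R}=0$. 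These two conditions project out the zero modes $\phi'\in\ker\mathcal{L}_+$ (translation) and $\phi\in\ker\mathcal{L}_-$ (phase) of the linearization (\ref{lin-NLS}); the weighted inner product is forced in the second condition because $\phi\notin L^2(\mathbb{R})$ while $\phi\in\mathcal{H}_R$. The Jacobian of the defining map with respect to $(\alpha,\beta)$ is diagonal and strictly positive at $\psi=\phi$, so the IFT applies and produces $\alpha(t),\beta(t)$ that are $C^1$ in $t$ along the flow.

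Using (\ref{ode}) to cancel the linear term, a Taylor expansion of $E$ at $\phi$ yields
\[
E(\psi) - E(\phi) = \langle\mathcal{L}_+ u, u\rangle_{L^2} + \langle\mathcal{L}_- v, v\rangle_{L^2} + N(u,v),
\]
where $N$ is a polynomial of degrees three through six in $(u,v)$. I would establish $|E(\psi_0)-E(\phi)|\lesssim \rho_R(\psi_0,\phi)$ at $t=0$ together with $|N(u,v)|\lesssim \rho_R^{3/2}$ at later times, using the Sobolev embedding $H^1(\mathbb{R})\hookrightarrow L^\infty(\mathbb{R})$ and the exponential decay of $\phi(1-\phi^2)$ on $(R,\infty)$. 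Combined with conservation of $E$, these bounds reduce the theorem to a standard bootstrap, provided the quadratic form on the right-hand side is coercive for $\rho_R$ under the two orthogonality conditions.

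Since $\mathcal{L}_+$ has essential spectrum $[1,\infty)$ with simple kernel spanned by the positive ground state $\phi'$, a standard gap-plus-convex-combination argument yields $\langle\mathcal{L}_+ u,u\rangle_{L^2}\gtrsim \|u\|_{H^1(\mathbb{R})}^2$ whenever $u\perp_{L^2}\phi'$. The delicate operator is $\mathcal{L}_-$, whose essential spectrum touches zero so no off-the-shelf spectral gap is available. I would exploit the Sturm--Liouville factorization arising from $\mathcal{L}_-\phi=0$ and $\phi>0$: setting $v=\phi w$ and using $\phi'=\phi(1-\phi^2)$ yields the identity
\[
\langle\mathcal{L}_- v,v\rangle_{L^2} = \int_{\mathbb{R}} \phi^2\,(w')^2\,dx,
\]
while the orthogonality $\langle v,\phi\rangle_{\mathcal{H}_R}=0$ becomes a zero-mean condition on $w$ against the finite measure $d\mu$ with density $\phi^2$ on $(-\infty,R)$ and $(1-\phi^2(R))^{-1}\phi^2(1-\phi^2)$ on $(R,\infty)$. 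A weighted Poincaré inequality $\int w^2\,d\mu \lesssim \int_{\mathbb{R}} \phi^2 (w')^2\,dx$ would then control $\|v\|_{\mathcal{H}_R}^2$, and the identity $v'=\phi(1-\phi^2)w+\phi w'$ transfers this into control of $\|v'\|_{L^2}^2$; expanding $|\psi|^2-|\phi|^2 = 2\phi u + u^2 + v^2$ and combining $H^1$ control of $u$ with $\mathcal{H}_R$ control of $v$ then recovers the $L^2(R,\infty)$ piece of $\rho_R$ up to higher-order terms absorbed by the bootstrap.

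The principal obstacle is precisely this weighted Poincaré estimate for $\mathcal{L}_-$: because $d\mu$ degenerates both at $-\infty$ (where $\phi^2\sim e^{2x}$) and at $+\infty$ (where $\phi^2(1-\phi^2)\sim e^{-2x}$), no generic Poincaré inequality applies, and the inequality must be established by hand from the explicit form of $\phi$. The threshold $\phi(R) > \frac{\sqrt{2}}{\sqrt{3}}$ in the hypothesis is exactly the regime in which the corresponding Hardy-type estimate on $(R,\infty)$ produces a strictly positive coercivity constant; once this is in place, the closing bootstrap argument of the previous step becomes routine.
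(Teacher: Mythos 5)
There is a structural gap in your expansion of the energy that the paper's $\eta$-based decomposition is specifically designed to avoid. On $(R,\infty)$ the distance $\rho_R$ controls only $u',v'$ in $L^2$, the weighted norms $\|u\|_{\mathcal{H}_R},\|v\|_{\mathcal{H}_R}$, and the combination $\eta=2\phi u+u^2+v^2$ in $L^2(R,\infty)$; it does \emph{not} control $u$ or $v$ individually in unweighted $L^2(R,\infty)$, $H^1$ or $L^\infty$. For instance, $\psi=e^{i\theta(x)}\phi$ with $\theta$ varying slowly from $0$ to an $O(1)$ value far beyond $x=R$ has arbitrarily small $\rho_R(\psi,\phi)$ (here $\eta\equiv 0$), yet after modulation $u,v$ remain $O(1)$ and non-square-integrable at $+\infty$. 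Consequently your decomposition $E(\psi)-E(\phi)=\langle\mathcal{L}_+u,u\rangle_{L^2}+\langle\mathcal{L}_-v,v\rangle_{L^2}+N(u,v)$ is not even well defined on the admissible perturbation class: the $\mathcal{L}_+$-form contains $\int(1-12\phi^2+15\phi^4)u^2\,dx$ with potential tending to $4$ at $+\infty$, hence diverges for such $u$, and the cubic-to-sextic remainder $N(u,v)$ has non-decaying coefficients on $(R,\infty)$, so the claimed bound $|N|\lesssim\rho_R^{3/2}$ via $H^1(\R)\hookrightarrow L^\infty(\R)$ fails because that embedding is unavailable for $u,v$. The paper circumvents this by keeping the far field entirely in the variable $\eta$: the exact identity (\ref{energy-decomp}) packages the anharmonic part as $(3\phi^2-2)\eta^2+\eta^3$, and the splitting (\ref{energy-correct}) uses the $\mathcal{L}_+$-density $q_+(u)$ only on $(-\infty,R)$, while on $(R,\infty)$ it keeps $q_-(u)$ (whose potential decays exponentially) together with $(3\phi^2-2)\eta^2+\eta^3$, with $\|\eta\|_{L^\infty(R,\infty)}$ controlled a priori by Proposition \ref{prop-supremum}.

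You have also misattributed the hypotheses of the theorem. The threshold $\phi(R)>\frac{\sqrt{2}}{\sqrt{3}}$ is not the borderline of a Hardy-type estimate for $\mathcal{L}_-$; it is exactly the condition $3\phi^2-2>0$ on $(R,\infty)$, which makes the quartic term $(3\phi^2-2)\eta^2$ nonnegative there, and the coercivity of $Q_-$ under $\langle v,\phi\rangle_{\mathcal{H}_R}=0$ (Lemma \ref{lem-operator-minus}) holds for \emph{every} $R$. Your weighted Poincar\'e route for $\mathcal{L}_-$ (substitution $v=\phi w$, mean-zero constraint against the finite measure $W_R\phi^2\,dx$) is essentially a reformulation of the paper's weighted spectral problem (\ref{spectral-problem-minus}) and could be completed, but you leave it unproven while declaring the $\mathcal{L}_+$ side routine. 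In the correct framework the delicate point is the opposite one: once the line is split at $x=R$, the relevant operator for $u$ is the piecewise operator $\mathcal{L}_R$ in (\ref{L-R}), whose formal zero mode $\phi_R$ is not in the domain, so $\mathcal{L}_R$ generically acquires a negative eigenvalue; one must verify $F(0)=\langle\mathcal{L}_R^{-1}\phi',\phi'\rangle_{L^2}<0$ to show that the single constraint $\langle u,\phi'\rangle_{L^2}=0$ restores positivity, and this is why the theorem requires $R$ sufficiently large (Lemma \ref{lem-operator-R}). Your outline never encounters this step. A minor further point: along the flow in $\mathcal{E}$ the modulation parameters $\alpha(t),\beta(t)$ are only known to be continuous, not $C^1$, but continuity suffices for the continuation argument.
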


\subsection{Methodology and organization of the paper}
\label{sec-1-3}

Comparing (\ref{energy-space}) with (\ref{Hilbert-space}) and (\ref{distance}) shows that the only difference between the distance in $\mathcal{E}$ and $\rho_R$ is the exponential weight added on $[R,\infty)$. A similar technique to deal with the orbital and asymptotic stability of the black soliton 
in the cubic NLS equation was proposed in \cite{GS}, after the previous studies of their linear and orbital stability in \cite{Saut,Chiron,Gallo}. This technique was further extended to the black soliton for the cubic NLS equation in spaces of higher regularity \cite{GPII}, the quintic NLS equation \cite{Alejo}, and the NLS equation with the intensity--dependent dispersion \cite{PP24}. In all cases, the black soliton is a saddle point of energy with a simple negative eigenvalue in the linearized operator $\mathcal{L}_-$. However, the symmetry-induced constraints allow us to obtain coercivity of the energy functional in the weighted Sobolev spaces and to prove that the black soliton is a constrained minimizer of energy. See also \cite[Chapter 7]{GP25}.

Compared to the case of the black solitons, the kink is already a minimizer of energy. The symmetry-induced constraints are sufficient to show 
coercivity of the quadratic part of the energy functional in $\mathcal{H}_R^1$. However, the anharmonic part of the energy functional is not positive definite 
unless the value of $R > 0$ in $\mathcal{H}_R$ is chosen from the condition $\phi(R) > \frac{\sqrt{2}}{\sqrt{3}}$. In addition, partitioning of $\mathbb{R}$ into $(-\infty,R)$ and $(R,\infty)$ induces a negative eigenvalue in the corresponding linearized operators and we need to choose $R > 0$ sufficiently large to rigorously prove that the symmetry-induced constraints are sufficient for coercivity of the quadratic part. This leads to the proof of Theorem \ref{th-main} developed in Section \ref{sec-2}.

The case of kinks is similar to the case of domain walls in the coupled system of cubic NLS equations considered in \cite{CPP18}. However, the orbital stability result for domain walls was defined in the case of $R \gg 1$ by the price that the perturbations are exponentially small with respect to $R$. 
Compared to the domain walls, we are not using here the asymptotic limit of 
large $R$ and hence the perturbations in $\rho_R$ are not exponentially small in $R$. Moreover, the numerical approximations developed in Section \ref{sec-3} suggest that the value of $R > 0$ can be chosen arbitrarily as long as $\phi(R) > \frac{\sqrt{2}}{\sqrt{3}}$. 

The proof of the orbital stability of kinks can be easily extended to the general case of competing power nonlinearities at and above the cubic nonlinearity. We give some details in Section \ref{sec-4} to show that
the expansion of energy developed for the cubic-quintic NLS equation (\ref{NLS}) is robust enough to handle the general case. 

The local well-posedness of the initial-value problem for (\ref{NLS}) in the energy space $\mathcal{E}$ is a necessary ingredient to the orbital stability theory. Although the proof of local  well-posedness in $\mathcal{E}$ is rather standard, based on \cite{Zhidkov2001}, we give it for consistency 
in Appendix \ref{S:lwp}.

\section{Proof of the orbital stability of kinks}
\label{sec-2}

\subsection{Expansion of the energy}
\label{sec-2-1}

Following \cite{GS} (see also adaptations of this method in 
\cite{Alejo,CPP18,GPII,PP24}), we introduce the variable 
\begin{equation}
\label{eta-variable}
\eta := |\phi + u + iv|^2 - \phi^2 = 2 \phi u + u^2 + v^2.
\end{equation}
Since $\phi$ is a critical point of $E(\psi)$, we have the energy 
decomposition 
\begin{equation}
\label{energy-decomp}
(\delta^2 E)(u,v) = \int_{\R} \left[ 
(u')^2 + (v')^2 + (1- \phi^2) (1-3 \phi^2) (u^2 + v^2) + (3 \phi^2 - 2) \eta^2 + \eta^3 \right] dx,
\end{equation}
where $(\delta^2 E)(u,v) = E(\phi + u + iv) - E(\phi)$. This can be rewritten by using the quadratic form associated with the operator 
$\mathcal{L}_-$ in (\ref{lin-NLS}) as 
\begin{equation}
\label{energy-equiv}
(\delta^2 E)(u,v) = Q_-(u) + Q_-(v) 
+ \int_{\R} \left[ (3 \phi^2 - 2) \eta^2 + \eta^3 \right] dx,
\end{equation}
where 
$$
Q_-(u) = \int_{\R} \left[ 
(u')^2  + (1- \phi^2) (1-3 \phi^2) u^2 \right] dx.
$$
The quadratic form $Q_-(u)$ is positive definite and can be made coercive in an exponentially weighted space under a scalar orthogonality 
condition on $u$. However, $3 \phi^2 - 2$ is sign-indefinite on $\R$, hence coercivity 
on $\eta$ is lost if we use the decomposition (\ref{energy-equiv}) uniformly on $\R$.  To deal with the problem, let us fix $R \in \R$ such that 
$\phi(R) > \frac{\sqrt{2}}{\sqrt{3}}$ and use the decomposition 
\begin{align}
(\delta^2 E)(u,v) &= \int_{-\infty}^R \left[ q_+(u) + q_-(v) + (3 \phi^2 -2) (u^2 + v^2) (4 \phi u + u^2 + v^2) + (2 \phi u + u^2 + v^2)^3 \right] dx 
\notag \\
& \qquad + \int_{R}^{+\infty} \left[ q_-(u) + q_-(v) + (3 \phi^2 - 2) \eta^2 + \eta^3 \right] dx,
\label{energy-correct}
\end{align}
where $q_{\pm}$ is a density for the quadratic forms $Q_{\pm}$ associated with the linearized operators $\mathcal{L}_{\pm}$ in (\ref{lin-NLS}). The quadratic form for $v$ in (\ref{energy-correct}) is the same as $Q_-(v)$ and it is associated with the operator 
$\mathcal{L}_- : H^2(\R) \subset L^2(\R) \to L^2(\R)$. On the other hand, the quadratic form for $u$ in (\ref{energy-correct}) is associated with the operator 
$\mathcal{L}_R : H^2(\R) \subset L^2(\R) \to L^2(\R)$ given by 
\begin{equation}
\label{L-R}
\mathcal{L}_R = -\partial_x^2 + V_R, \quad V_R(x) = \left\{ 
\begin{array}{ll} 
1 - 12 \phi^2(x) +  15 \phi^4(x), \quad & x \in (-\infty,R), \\
1 - 4 \phi^2(x) + 3 \phi^4(x), \quad & x \in (R,\infty).
\end{array}
\right.
\end{equation}
The potential $V_R$ in (\ref{L-R}) is piecewisely smooth, bounded at infinity, with a bounded jump at $x = R$. 

\subsection{Spectral estimates}
\label{sec-2-2}

Let us first discuss the spectra of the self-adjoint operators $\mathcal{L}_+, \mathcal{L}_-, \mathcal{L}_R$ in $L^2(\R)$ 
with the domain in $H^2(\R)$. These results follows from the standard Weyl's and Sturm's theory for the Schr\"{o}dinger operators. 
\begin{itemize}
  \item Since $1 - 12 \phi^2 + 15 \phi^4 \to 1$ as $x \to -\infty$ and $1 - 12 \phi^2 + 15 \phi^4 \to 4$ as $x \to +\infty$, 
  the spectrum of $\mathcal{L}_+$ consists of two branches of the continuous spectrum $[1,\infty)$ and $[4,\infty)$, a simple zero 
  eigenvalue with the eigenfunction $\phi'$, and (possibly) isolated simple eigenvalues in $(0,1)$. 

  \item Since $1 - 4 \phi^2 + 3 \phi^4 \to 1$ as $x \to -\infty$ and $1 - 4 \phi^2 + 3 \phi^4 \to 0$ as $x \to +\infty$,  
  the spectrum of $\mathcal{L}_-$ consists of two branches of the continuous spectrum $[1,\infty)$ and $[0,\infty)$.

  \item Since $V_R \to 1$ as $x \to -\infty$ and $V_R \to 0$ as $x \to +\infty$,  the spectrum of $\mathcal{L}_R$ consists of two branches of the continuous spectrum $[1,\infty)$ and $[0,\infty)$ and (possibly) isolated eigenvalues of finite algebraic multiplicity in $(-\infty,0)$.
\end{itemize}
In what follows, we analyze the self-adjoint operators $\mathcal{L}_-$ and $\mathcal{L}_R$ in a weighted $L^2(\R)$ space $\mathcal{H}_R$ given by (\ref{Hilbert-space}) with the corresponding domain $\mathcal{H}^2_R$. 
We establish the following coercivity estimates in $\mathcal{H}^1_R$ 
for the two quadratic forms $Q_-(v)$ and $Q_R(u)$ appearing in (\ref{energy-correct}). 

\begin{lemma}
	\label{lem-operator-minus}
For every $R \in \mathbb{R}$, there exists a positive constant $C_-$ such that 
	\begin{align}
Q_-(v) \geq C_- \left( \| v' \|^2_{L^2(\R)} + \| v \|^2_{\mathcal{H}_R}\right), \quad \forall v \in \mathcal{H}^1_R(\R) : \;\;  \langle v, \phi \rangle_{\mathcal{H}_R} = 0.
	\label{coercivity-minus}
	\end{align}
\end{lemma}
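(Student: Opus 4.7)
The strategy is to reduce the coercivity statement to a weighted Poincar\'e-type inequality by exploiting the factorization induced by $\mathcal{L}_-\phi = 0$. Since $\phi(x) > 0$ on $\R$ and $-\phi'' + (1-\phi^2)(1-3\phi^2)\phi = 0$, the substitution $v = \phi w$ combined with integration by parts gives
\begin{equation*}
Q_-(v) = \int_\R \phi^2 (w')^2\, dx.
\end{equation*}
The boundary contributions $\phi\phi' w^2$ vanish at $\pm\infty$: at $+\infty$, $\phi\phi' = \phi^2(1-\phi^2) \to 0$ while $v' \in L^2$ forces $v$ to have a finite limit and hence $w = v/\phi$ stays bounded; at $-\infty$, $\phi\phi' w^2 = (1-\phi^2)v^2 \to 0$ since $v \in L^2(-\infty, R)$ and $v' \in L^2$ imply $v \to 0$. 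The orthogonality $\langle v,\phi\rangle_{\mathcal{H}_R} = 0$ becomes $\int_\R \rho \phi^2 w\, dx = 0$ with $\rho = 1$ on $(-\infty, R)$ and $\rho = (1-\phi^2)/(1-\phi^2(R))$ on $(R,\infty)$; this excludes the null mode $w \equiv \text{const}$ of $Q_-$.

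Next I would reduce (\ref{coercivity-minus}) to a control of $\|v\|_{\mathcal{H}_R}^2$ alone. Writing $Q_-(v) = \|v'\|_{L^2}^2 + \int_\R (1-\phi^2)(1-3\phi^2)\, v^2\, dx$ and using the elementary inequality $|(1-\phi^2)(1-3\phi^2)| \leq 2(1-\phi^2)$ valid for $\phi \in [0,1]$, the potential integral is bounded on $(-\infty, R)$ by $2\|v\|_{L^2(-\infty, R)}^2$ and on $(R,\infty)$ by $2(1-\phi^2(R))\|v\|_{\mathcal{H}_R}^2$. Hence $\|v'\|_{L^2}^2 \leq Q_-(v) + C\|v\|_{\mathcal{H}_R}^2$ for a constant $C$ independent of $v$, and it suffices to prove the reduced inequality
\begin{equation*}
Q_-(v) \geq \tilde C \|v\|_{\mathcal{H}_R}^2 \quad \text{for all } v \in \mathcal{H}_R^1 \text{ with } \langle v,\phi\rangle_{\mathcal{H}_R} = 0.
\end{equation*}
The claimed bound then follows by elementary algebra.

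The reduced inequality is established by contradiction and compactness. Suppose a sequence $v_n = \phi w_n \in \mathcal{H}_R^1$ exists with $\|v_n\|_{\mathcal{H}_R} = 1$, $\langle v_n,\phi\rangle_{\mathcal{H}_R} = 0$, and $Q_-(v_n) = \int \phi^2 (w_n')^2\, dx \to 0$. On any compact interval $\phi$ is bounded below by a positive constant, so $w_n' \to 0$ in $L^2_{\mathrm{loc}}$; combined with the $L^2_{\mathrm{loc}}$-bound on $w_n$ coming from $\|v_n\|_{\mathcal{H}_R} = 1$ and $\rho\phi^2$ being bounded below on compacts, one extracts a subsequence with $w_n \to c$ in $C^0_{\mathrm{loc}}(\R)$ for some constant $c$. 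The orthogonality passes to the limit by a tail/interior split: the interior integral converges by dominated convergence, while the tails $|x| > N$ are $O\bigl((\int_{|x|>N}\rho\phi^2)^{1/2}\bigr)$ uniformly in $n$ by Cauchy--Schwarz and $\|v_n\|_{\mathcal{H}_R} = 1$. Since $\rho\phi^2 \in L^1(\R)$ (it decays like $e^{2x}$ at $-\infty$ and $e^{-2x}$ at $+\infty$), the tails vanish as $N \to \infty$, forcing $c\int_\R \rho\phi^2\, dx = 0$ and therefore $c = 0$.

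The main obstacle is then to upgrade $v_n \to 0$ in $C^0_{\mathrm{loc}}$ to $\|v_n\|_{\mathcal{H}_R} \to 0$, which would contradict the normalization. The right tail $\int_M^\infty \rho\phi^2 w_n^2\, dx$ is controlled by the exponential decay of $\rho$: a uniform $L^\infty_{\mathrm{loc}}$-bound on $w_n$ via Sobolev embedding of $H^1_{\mathrm{loc}}$, together with $\rho\phi^2 \in L^1$, makes this tail arbitrarily small for $M$ large. The left tail is subtler since $\rho = 1$ on $(-\infty, R)$; here I exploit that $(1-\phi^2)(1-3\phi^2) \to 1$ as $x \to -\infty$, so that for $M$ sufficiently large,
\begin{equation*}
\tfrac{1}{2}\int_{-\infty}^{-M} v_n^2\, dx \leq \int_{-\infty}^{-M} (1-\phi^2)(1-3\phi^2)\, v_n^2\, dx \leq Q_-(v_n) + \int_{-M}^{\infty} |(1-\phi^2)(1-3\phi^2)|\, v_n^2\, dx.
\end{equation*}
The first term vanishes; the second is dominated by $\|v_n\|_{\mathcal{H}_R \cap (-M,\infty)}^2$, which tends to zero by local strong convergence combined with the right-tail estimate. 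Combining all contributions yields $\|v_n\|_{\mathcal{H}_R} \to 0$, the desired contradiction.
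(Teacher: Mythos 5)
Your proof is correct in its overall architecture, but it takes a genuinely different route from the paper. The paper argues spectrally: it recasts coercivity as the generalized eigenvalue problem $\mathcal{L}_- v = \lambda W_R v$ in the weighted space $\mathcal{H}_R$, identifies the continuous spectrum $[1,\infty)$ from the limit $x\to-\infty$, shows by an ODE connection argument (decay $e^{\sqrt{1-\lambda}x}$ at $-\infty$ versus the $\{1,x\}$ behavior at $+\infty$) that everything below $1$ consists of simple eigenvalues, uses $\mathcal{L}_-\phi=0$ with $\phi>0$ and $\phi\in\mathcal{H}^2_R$ to conclude that $0$ is the lowest eigenvalue, removes it with the constraint $\langle v,\phi\rangle_{\mathcal{H}_R}=0$, and upgrades to the full estimate by G\"{a}rding's inequality. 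You instead use the ground-state substitution $v=\phi w$, which turns $Q_-$ into $\int_\R\phi^2(w')^2dx$, and prove the weighted Poincar\'e inequality by contradiction and compactness, with the orthogonality killing the constant mode and the integrability of the weight $W_R\phi^2$ (your $\rho\phi^2$) handling the tails. Your route is more elementary and self-contained (no Weyl theory in the weighted space, no simplicity-of-eigenvalues analysis), at the price of a non-explicit constant $C_-$; the paper's route identifies the constant with the spectral gap and is the template reused for the harder operator $\mathcal{L}_R$ in the next lemma.

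One justification needs repair, and the same issue appears twice: elements of $\mathcal{H}^1_R$ need not be bounded, nor have a limit, as $x\to+\infty$, because the weight degenerates exponentially there; $v'\in L^2(\R)$ only gives $|v(x)|\le |v(R)|+\|v'\|_{L^2}\,(x-R)^{1/2}$ (e.g.\ $v\sim\log x$ is admissible). So your claim that the boundary term $\phi\phi'w^2$ vanishes at $+\infty$ ``because $v$ has a finite limit and $w$ stays bounded'' is not justified as stated, and likewise the right-tail estimate cannot be run off a uniform $L^\infty_{\mathrm{loc}}$ bound on $w_n$, since local bounds say nothing about $(M,\infty)$ as a whole. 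Both steps are salvageable by the growth bound above: $\phi\phi'w^2\lesssim e^{-2x}(1+x)\to 0$, and for a fixed $M>R$ one has $w_n^2(x)\lesssim w_n^2(M)+\|w_n'\|^2_{L^2(M,\infty)}(x-M)$ with $\|w_n'\|^2_{L^2(M,\infty)}\le \phi(M)^{-2}Q_-(v_n)\to 0$ and $w_n(M)\to 0$, so that $\int_M^\infty W_R\,\phi^2 w_n^2\,dx\to 0$ because $\int_M^\infty W_R\,\phi^2\,(1+x-M)\,dx<\infty$. With these corrections your argument closes and yields the stated coercivity.
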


\begin{proof}
Consider the spectrum of $\mathcal{L}_- : \mathcal{H}^2_R(\R) \subset \mathcal{H}_R \to \mathcal{H}_R$, 
which can be obtained from the spectral problem 
	\begin{equation}
	\label{spectral-problem-minus}
		\mathcal{L}_- v = \lambda W_R v, \quad v \in \mathcal{H}^2_R(\R), \quad 
		W_R(x) = \left\{ 
		\begin{array}{ll} 
		1, \quad & x \in (-\infty,R), \\
		\frac{1 - \phi^2(x)}{1 - \phi^2(R)}, \quad & x \in (R,\infty),
		\end{array}
		\right.
	\end{equation}
where the weight function $W_R$ is due to the weight in the Hilbert space $\mathcal{H}_R$ given by (\ref{Hilbert-space}). Since $1 - 4 \phi^2 + 3 \phi^4 \to 1$ and $W_R \to 1$ as $x \to -\infty$, the spectrum of $\mathcal{L}_-$ in $\mathcal{H}_R$ includes the continuous spectrum $[1,\infty)$. 

Next, we show that there are only simple isolated eigenvalues in $(-\infty,1)$. 
Indeed, for every $\lambda \in (-\infty,-1)$, solutions to the second-order differential equation in (\ref{spectral-problem-minus}) are spanned 
by $\{ e^{\sqrt{1-\lambda}x}, e^{-\sqrt{1-\lambda}x} \}$ as $x \to -\infty$ and by $\{ 1,x \}$ as $x \to +\infty$. 
The eigenfunction $u \in \mathcal{H}^2_R(\R)$ corresponds to the connection between the decaying solution $e^{\sqrt{1-\lambda}x} \to 0$ 
as $x \to -\infty$ and the bounded solution $1$ as $x \to +\infty$ since $e^{-\sqrt{1-\lambda}x}$ as $x \to -\infty$ and $x$ as $x \to +\infty$ 
are not admissible in $\mathcal{H}_R^2(\R)$. Hence, there is no continuous spectrum in $(-\infty,1)$ and 
isolated eigenvalues are at most simple. 

Since $\mathcal{L}_- \phi = 0$ in $L^2(\R)$, $\phi \in \mathcal{H}^2_R$, and $\phi(x) > 0$ for all $x \in \mathbb{R}$, 
no eigenvalues exist in $(-\infty,0)$ due to the Rayleigh quotient 
$$
\inf_{v \in \mathcal{H}^1_R(\R)} \frac{Q_-(v)}{\| v \|^2_{\mathcal{H}_R}} = 0.
$$
The constraint $\langle v, \phi \rangle_{\mathcal{H}_R} = 0$ removes the lowest eigenvalue of $\mathcal{L}_-$ in $\mathcal{H}_R$. 
Since all isolated eigenvalues in $(-\infty,1)$ are simple, we get the coercivity estimate from the spectral theory:
$$
Q_-(v) \geq \lambda_0 \| v \|^2_{\mathcal{H}_R}, \qquad \forall v \in \mathcal{H}^1_R(\R) : \;\;  \langle v, \phi \rangle_{\mathcal{H}_R} = 0,
$$
where $\lambda_0$ is either the second eigenvalue of (\ref{spectral-problem-minus}) in $(0,1)$ or $1$ if no other eigenvalues of 
(\ref{spectral-problem-minus}) in $(0,1)$ exists. The coercivity estimate (\ref{coercivity-minus}) follows 
by G\"{a}rding's inequality.
\end{proof}

\begin{lemma}
	\label{lem-operator-R}
There is a sufficiently large $R_0 > 0$ such that for every $R \in (R_0,\infty)$, there exists a positive constant $C_R$ such that 
	\begin{align}
Q_R(u) := \int_{-\infty}^R q_+(u) dx + \int_{R}^{+\infty} q_-(u) dx &\geq C_R \left( \| u' \|^2_{L^2(\R)} + \| u \|^2_{\mathcal{H}_R}\right), \notag \\ 
& \qquad \forall u \in \mathcal{H}^1_R(\R) : \;\;  \langle u, \phi' \rangle_{L^2(\R)} = 0.
	\label{coercivity-R}
	\end{align}
\end{lemma}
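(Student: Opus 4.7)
My plan is to follow the same template as Lemma~\ref{lem-operator-minus}: analyze the generalized eigenvalue problem $\mathcal{L}_R u = \lambda W_R u$ in $\mathcal{H}_R$, show that for $R$ sufficiently large the spectrum in $(-\infty, 0]$ consists of exactly one simple negative eigenvalue, verify that the orthogonality condition $\langle u, \phi'\rangle_{L^2(\R)} = 0$ removes it, and then upgrade the resulting $\mathcal{H}_R$-coercivity to $\mathcal{H}^1_R$-coercivity via G\"{a}rding's inequality.

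First I would set up the spectrum of $\mathcal{L}_R$ in $\mathcal{H}_R$. Since $V_R \to 1 = W_R$ as $x \to -\infty$, the essential spectrum includes $[1,\infty)$. As $x \to +\infty$, $V_R \to 0$ and $W_R$ decays like $e^{-2(x-R)}/(1-\phi^2(R))$, so solutions of the ODE behave asymptotically like the span $\{1, x\}$; the requirement $u' \in L^2(\R)$ selects the bounded asymptote, which excludes any continuous spectrum at $+\infty$ and forces eigenvalues in $(-\infty, 1)$ to be simple by the usual shooting argument. To locate a negative eigenvalue I would test with $\phi'$. Integrating $q_+(\phi')$ by parts on $(-\infty, R)$ via $\mathcal{L}_+\phi' = 0$ yields $\int_{-\infty}^R q_+(\phi')\,dx = \phi'(R)\phi''(R)$; differentiating $\mathcal{L}_-\phi = 0$ and combining with $\mathcal{L}_+\phi' = 0$ yields the commutator identity $\mathcal{L}_-\phi' = 4\phi^2(2-3\phi^2)\phi'$, and then integrating $q_-(\phi')$ by parts on $(R, \infty)$ produces
\begin{equation*}
\int_R^{\infty} q_-(\phi')\,dx = -\phi'(R)\phi''(R) + \int_R^{\infty} 4\phi^2(2-3\phi^2)(\phi')^2\,dx.
\end{equation*}
The boundary terms at $x = R$ cancel and hence $Q_R(\phi') = \int_R^\infty 4\phi^2(2-3\phi^2)(\phi')^2\,dx < 0$ under the hypothesis $\phi(R) > \sqrt{2/3}$. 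Thus the ground-state eigenvalue $\lambda_0(R)$ of the weighted problem is strictly negative.

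The main obstacle, and the source of the hypothesis $R > R_0$, is to show that $\lambda_0(R)$ is the \emph{only} non-positive eigenvalue and that the second eigenvalue $\lambda_1(R)$ is bounded below by some fixed $c_1 > 0$ for $R$ large. I would argue by perturbation from $\mathcal{L}_+$: the differences $V_R - V_+ = 4\phi^2(2-3\phi^2)\chi_{(R,\infty)}$ and $W_R - 1$ are supported on $(R,\infty)$ where $1 - \phi^2$ is exponentially small, so $\mathcal{L}_R$ converges to $\mathcal{L}_+$ in a suitable resolvent sense as $R \to \infty$. The limiting operator $\mathcal{L}_+$ has simple ground eigenvalue $0$ with eigenfunction $\phi'$ and a spectral gap above it (the next isolated eigenvalue in $(0,1)$, or in its absence the threshold $1$), so by a standard continuity/resolvent argument, for $R$ sufficiently large $\mathcal{L}_R$ in $\mathcal{H}_R$ has exactly one simple negative eigenvalue $\lambda_0(R)$ with ground state $u_0(R) \to \phi'$ in $L^2(\R)$, while $\lambda_1(R) \geq c_1 > 0$ is preserved.

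Finally I would implement the orthogonality removal. Since $u_0(R) \to \phi'$ in $L^2(\R)$, the inner product $\langle u_0(R), \phi'\rangle_{L^2(\R)} \to \|\phi'\|_{L^2}^2 > 0$ is bounded away from zero, so the constraint $\langle u, \phi'\rangle_{L^2} = 0$ is transverse to the ground state for $R$ large. Decomposing $u = a\,u_0 + u_\perp$ with $\langle u_0, u_\perp\rangle_{\mathcal{H}_R} = 0$, the constraint determines $a = -\langle u_\perp, \phi'\rangle_{L^2}/\langle u_0, \phi'\rangle_{L^2}$, and a weighted Cauchy-Schwarz (using that $\int(\phi')^2/W_R\,dx$ is finite and bounded in $R$ thanks to the exponential decay of $\phi'$) yields $|a| \leq C\,\|u_\perp\|_{\mathcal{H}_R}$. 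Combined with $Q_R(u) = a^2 \lambda_0 \|u_0\|_{\mathcal{H}_R}^2 + Q_R(u_\perp)$, the spectral bound $Q_R(u_\perp) \geq \lambda_1 \|u_\perp\|_{\mathcal{H}_R}^2$, and the smallness $|\lambda_0(R)| \to 0$, this gives $Q_R(u) \geq c\,\|u\|_{\mathcal{H}_R}^2$ for some $c > 0$ after enlarging $R_0$ if necessary. G\"{a}rding's inequality then upgrades this $\mathcal{H}_R$-coercivity to the full estimate (\ref{coercivity-R}).
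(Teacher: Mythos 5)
Your overall architecture (weighted spectral problem for $\mathcal{L}_R$ in $\mathcal{H}_R$, one bad direction removed by the constraint $\langle u,\phi'\rangle_{L^2(\R)}=0$, then G\"{a}rding's inequality) is sound, and two of your ingredients are correct and even more explicit than the paper's: the test-function computation $Q_R(\phi')=\int_R^\infty 4\phi^2(2-3\phi^2)(\phi')^2\,dx<0$ (the paper instead uses the piecewise function $\phi_R$ with $Q_R(\phi_R)=0$ and leaves the sign of the lowest eigenvalue open, treating both cases), and the weighted Cauchy--Schwarz bound $|\langle u_\perp,\phi'\rangle_{L^2(\R)}|\le \|u_\perp\|_{\mathcal{H}_R}\,\|\phi'/W_R\|_{\mathcal{H}_R}$, which is legitimate since $(\phi')^2/W_R$ is integrable. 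Your treatment of the constraint by the direct decomposition $u=a\,u_0+u_\perp$ also departs from the paper, which instead works with the constrained-eigenvalue function $F(\lambda)=\langle(\mathcal{L}_R-\lambda W_R)^{-1}W_R\phi_R,\phi_R\rangle_{\mathcal{H}_R}$, the identity $\langle u,\phi_R\rangle_{\mathcal{H}_R}=\langle u,\phi'\rangle_{L^2(\R)}$, and the criterion $F(0)<0$.

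The genuine gap is the step where you assert that $\mathcal{L}_R$ converges to $\mathcal{L}_+$ ``in a suitable resolvent sense'' because the differences $V_R-V_+$ and $W_R-1$ are ``supported on $(R,\infty)$ where $1-\phi^2$ is exponentially small.'' Those differences are not small: on $(R,\infty)$ one has $V_R-V_+=4\phi^2(2-3\phi^2)\to-4$ and $W_R-1\to-1$ as $x\to+\infty$; only the location of an $O(1)$ perturbation is pushed out to infinity, and moreover the ambient Hilbert space $\mathcal{H}_R$ itself changes with $R$. Translating an order-one perturbation to infinity yields at best strong resolvent convergence, which does not give the three facts your closing estimate actually requires: that $\lambda_0(R)$ is the only eigenvalue below a fixed $c_1>0$, that $\lambda_0(R)\to 0$ as $R\to\infty$, and that $u_0(R)\to\phi'$ in $L^2(\R)$ with $\langle u_0(R),\phi'\rangle_{L^2(\R)}$ bounded away from zero; without them the inequality $\lambda_1-|\lambda_0|C^2>0$ cannot be closed. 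The paper needs the closely related fact $\lambda_R\to 0$ and derives it from convergence of the quadratic forms, $Q_R(u)\to Q_+(u)\ge 0$ with $\mathcal{L}_+\ge 0$ and simple kernel spanned by $\phi'$; a repair of your argument should likewise run through the forms (e.g., showing that a second eigenvalue below $c_1$ would produce, for large $R$, a two-dimensional subspace nearly annihilating $Q_+$, contradicting the simplicity of the ground state of $\mathcal{L}_+$), rather than through any smallness of $V_R-V_+$ or $W_R-1$.
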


\begin{proof}
By the same argument as in the proof of Lemma \ref{lem-operator-minus}, the spectrum of 
$\mathcal{L}_R : \mathcal{H}^2_R(\R) \subset \mathcal{H}_R \to \mathcal{H}_R$ includes the continuous spectrum $[1,\infty)$ 
and simple isolated eigenvalues in $(-\infty,1)$. Eigenvalues 
of $\mathcal{L}_R$ in $\mathcal{H}_R$ are defined from solutions of the spectral problem 
	\begin{equation}
	\label{spectral-problem}
		\mathcal{L}_R u = \lambda W_R u, \quad u \in \mathcal{H}^2_R(\R), \quad 
		W_R(x) = \left\{ 
		\begin{array}{ll} 
		1, \quad & x \in (-\infty,R), \\
		\frac{1 - \phi^2(x)}{1 - \phi^2(R)}, \quad & x \in (R,\infty),
		\end{array}
		\right.
	\end{equation}
for $\lambda \in (-\infty,1)$. Let $\phi_R \in \mathcal{H}^1_R(\R)$ be given by 
	$$
	\phi_R(x) = \left\{ 
	\begin{array}{ll} 
	\phi'(x), \quad & x \in (-\infty,R), \\
	\frac{\phi'(R)}{\phi(R)} \phi(x), \quad & x \in (R,\infty).
	\end{array}
	\right.
	$$
Since $\mathcal{L}_R \phi_R = 0$ in $H^{-1}(\R)$, we have $Q_R(\phi_R) = 0$. However, 
since $\phi_R \notin \mathcal{H}^2_R$, the spectral problem (\ref{spectral-problem}) may have 
eigenvalues $\lambda$ below $0$. 

We have $Q_R(u) \to Q_+(u)$ as $R \to \infty$ for every $u \in H^1(\mathbb{R})$ with $Q_+(u) \geq 0$ 
for every $u \in H^1(\mathbb{R})$ since $\mathcal{L}_+ \phi' = 0$ with 
$\phi'(x) > 0$ for all $x \in \mathbb{R}$. This implies that there exists only one simple eigenvalue of 
the spectral problem (\ref{spectral-problem}) near $0$ for sufficiently large $R$. 
We now add the constraint $\langle u, \phi' \rangle_{L^2(\R)} = 0$ to show coercivity 
(\ref{coercivity-R}). By using (\ref{first-order}), we have 
$$
\langle u, \phi_R \rangle_{\mathcal{H}_R} = \int_{-\infty}^R u(x) \phi'(x) dx + 
\frac{\phi'(R)}{\phi(R)} \int_{R}^{+\infty} u(x) \frac{1-\phi^2(x)}{1 - \phi^2(R)} \phi(x) dx = \langle u, \phi' \rangle_{L^2(\R)}.
$$
Let $\Pi_R : \mathcal{H}_R \to \mathcal{H}_R |_{\{ \phi_R \}^{\perp}}$ be the orthogonal projection operator given by 
$$
\Pi_R u = u - \frac{\langle u, \phi_R \rangle_{\mathcal{H}_R}}{\| \phi_R \|^2_{\mathcal{H}_R}} \phi_R, \quad \forall u \in \mathcal{H}_R.
$$
Eigenvalues of $\mathcal{L}_R |_{\{ \phi_R \}^{\perp}} : \mathcal{H}^2_R |_{\{ \phi_R \}^{\perp}} \subset \mathcal{H}_R |_{\{ \phi_R \}^{\perp}} \to \mathcal{H}_R |_{\{ \phi_R \}^{\perp}}$ are determined by eigenvalues of the constrained eigenvalue problem 
\begin{equation}
    \label{spectral-constrained}
\left( \mathcal{L}_R - \lambda W_R \right) u = \nu W_R \phi_R, \quad \langle u, \phi_R \rangle_{\mathcal{H}_R} = 0,
\end{equation}
where $\nu \in \mathbb{R}$ is the Lagrange multiplier. Denote the smallest eigenvalue of 
the spectral problem (\ref{spectral-problem}) by $\lambda_R \in (-\infty,1)$  
and the corresponding eigenfunction by $u_R \in \mathcal{H}^2_R$. Since $u_R(x) > 0$ and $\phi_R(x) > 0$ 
for all $x \in \mathbb{R}$, we have $\langle u_R, \phi_R \rangle_{\mathcal{H}_R} \neq 0$ (strictly positive). Therefore, the smallest eigenvalue 
of the constrained spectral problem (\ref{spectral-constrained}) is determined by the zero of the function 
$F(\lambda) : (-\infty,\lambda_R) \cup (\lambda_R,\lambda^*_R) \to \mathbb{R}$, where 
$$
F(\lambda) = \langle (\mathcal{L}_R - \lambda W_R)^{-1} W_R \phi_R, \phi_R \rangle_{\mathcal{H}_R} 
$$
and either $\lambda_R^* > 0$ is the second eigenvalue of the spectral problem (\ref{spectral-problem}) or $\lambda_R^* = 1$. 

By Theorem 2.7 in \cite{GP25}, the function $F(\lambda)$ is monotonically increasing in $(-\infty,\lambda_R) \cup (\lambda_R, \lambda^*_R)$ satisfying $F(\lambda) \to 0^+$ as $\lambda \to -\infty$ and a simple pole singularity as $\lambda \to \lambda_R$. Therefore, there are no zeros of $F(\lambda)$ for $\lambda \in (-\infty,\lambda_R)$ and if $\lambda_R \geq 0$, then 
the constraint $\langle u, \phi' \rangle_{L^2(\R)} = 0$ yields the coercivity estimate 
\begin{equation}
\label{tech-coerc}
Q_R(u) \geq \lambda_0 \| u \|^2_{\mathcal{H}_R}, \qquad \forall u \in \mathcal{H}^1_R(\R) : \;\;  \langle u, \phi' \rangle_{L^2(\R)} = 0,
\end{equation}
where $\lambda_0$ is either the lowest eigenvalue of the constrained spectral problem (\ref{spectral-constrained}) in $(0,1)$ or $1$ if no eigenvalues of 
(\ref{spectral-constrained}) in $(0,1)$ exists. 

If $\lambda_R < 0$ and $\lambda_R^* > 0$, then the lowest eigenvalue of the constrained spectral problem (\ref{spectral-constrained}) 
is strictly positive if and only if 
\begin{equation}
    \label{criterion}
F(0) = \langle (\mathcal{L}_R)^{-1} W_R \phi_R, \phi_R \rangle_{\mathcal{H}_R} = \langle W_R (\mathcal{L}_R)^{-1} W_R \phi_R, \phi_R \rangle_{L^2(\R)} < 0.
\end{equation}
Since $W_R \phi_R = \phi'(x)$ for every $x \in \mathbb{R}$, the criterion (\ref{criterion}) can be equivalently written as 
\begin{equation}
\label{E:Fzero-altform}
F(0) = \langle (\mathcal{L}_R)^{-1} \phi', \phi' \rangle_{L^2(\R)} < 0.
\end{equation}
Since $\lambda_R \to 0$ as $R \to +\infty$ implying $F(0) \to -\infty$ as $R \to +\infty$, the criterion (\ref{E:Fzero-altform}) is satisfied for sufficiently large $R$. This leads again to (\ref{tech-coerc}). The coercivity estimate (\ref{coercivity-R}) follows from (\ref{tech-coerc}) 
by G\"{a}rding's inequality.
\end{proof}

\subsection{Decomposition near the kink orbit}
\label{sec-2-3}

The constraint in (\ref{coercivity-minus}) can be satisfied by using the rotational parameter $\alpha$ in the orbit $\{ e^{-i \alpha} \phi \}_{\alpha \in \mathbb{R}}$. The constraint in (\ref{coercivity-R}) can be satisfied by using the translational parameter $\beta$ in the orbit $\{ \phi(\cdot - \beta)\}_{\beta \in \mathbb{R}}$. To incorporate both constraints, we define the decomposition relative to the two-dimensional orbit $\{ e^{-i \alpha} \phi(\cdot - \beta)\}_{\alpha, \beta \in \mathbb{R}}$ with the perturbation $u + i v$, where $u$ satisfies the constraint in (\ref{coercivity-R}) and $v$ satisfies the constraint in (\ref{coercivity-minus}), see Proposition \ref{prop-decomposition}.

We work in the energy space $\mathcal{E}$ in (\ref{energy-space}) equipped with the distance $\rho_R$ in (\ref{distance}). The following proposition shows that the element of $\mathcal{E}$ with small $\rho_R$ is bounded in $L^{\infty}(\mathbb{R})$.

\begin{proposition}
    \label{prop-supremum}
  If there is a small fixed $\epsilon_0 > 0$ such that 
  \begin{equation}
\label{bound-apriori}
\rho_R\left(\psi,\phi \right) \leq \epsilon^2_0,
\end{equation}
for every $\psi \in \mathcal{E}$, then $\psi \in L^{\infty}(\mathbb{R})$ and $\eta := |\psi|^2 - \phi^2$ satisfies 
  \begin{equation}
\label{bound-supremum}
\| \eta \|_{L^{\infty}(R,\infty)} \leq C \epsilon_0,
\end{equation}
for some $C > 0$.
\end{proposition}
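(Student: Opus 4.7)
The plan is to reduce (\ref{bound-supremum}) to a one-dimensional Sobolev embedding for $\eta$ on $(R,\infty)$, namely
\[
\|\eta\|_{L^\infty(R,\infty)}^2 \leq |\eta(R)|^2 + 2 \|\eta\|_{L^2(R,\infty)} \|\eta'\|_{L^2(R,\infty)},
\]
and then to control each term on the right by a constant multiple of $\epsilon_0$. The $L^2$-part is immediate from the last term in (\ref{distance}): $\|\eta\|_{L^2(R,\infty)}^2 \leq \rho_R(\psi,\phi) \leq \epsilon_0^2$. So the real work is bounding $\eta(R)$ and $\eta'$ in $L^2(R,\infty)$.

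First, observe that on any bounded subinterval, say $(R-1, R+1)$, the weight $W_R(x) = (1-\phi^2(x))/(1-\phi^2(R))$ in (\ref{Hilbert-space}) is bounded below by a positive constant. Combining this with the $\|\psi'-\phi'\|_{L^2(\R)}^2$ and $\|\psi-\phi\|_{\mathcal{H}_R}^2$ contributions to $\rho_R$, one gets a local $H^1$ bound $\|\psi-\phi\|_{H^1(R-1,R+1)} \leq C \epsilon_0$, whence by Sobolev embedding $\|\psi-\phi\|_{L^\infty(R-1,R+1)} \leq C\epsilon_0$; in particular $|\eta(R)| \leq C\epsilon_0$. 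Next, since $\phi$ is real,
\[
\eta' = 2\,\mathrm{Re}\bigl(\bar\psi(\psi'-\phi')\bigr) + 2\phi'\bigl(\mathrm{Re}\,\psi - \phi\bigr),
\]
so $|\eta'| \leq 2|\psi|\,|\psi'-\phi'| + 2|\phi'|\,|\psi-\phi|$. The crucial identity $|\phi'|^2 = \phi^2(1-\phi^2)^2 \leq (1-\phi^2(R))^2\, W_R(x)$ on $(R,\infty)$ (since $1-\phi^2 \leq 1-\phi^2(R)$ there) yields $\|\,|\phi'|(\psi-\phi)\|_{L^2(R,\infty)} \leq C\|\psi-\phi\|_{\mathcal{H}_R} \leq C\epsilon_0$. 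The first term, however, needs an a priori $L^\infty$ bound on $\psi$ over $(R,\infty)$, which is precisely what we are trying to establish.

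To break this circularity I would run a continuity (bootstrap) argument. Fix $M = 2$ and set
\[
x_* := \sup\bigl\{ x > R \,:\, |\psi(y)| \leq M \text{ for all } y \in [R,x] \bigr\}.
\]
Since $\psi \in H^1_{\mathrm{loc}}$ is continuous and $|\psi(R)| \leq \phi(R) + C\epsilon_0 < M$ for $\epsilon_0$ small, $x_* > R$. On $(R,x_*)$, the bound $|\psi| \leq M$ together with the estimate from the previous paragraph gives $\|\eta'\|_{L^2(R,x_*)} \leq C\epsilon_0$; feeding this into the Sobolev inequality together with $|\eta(R)| \leq C\epsilon_0$ yields $\|\eta\|_{L^\infty(R,x_*)} \leq C\epsilon_0$. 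Hence on $(R,x_*)$, $|\psi|^2 = \phi^2 + \eta \leq 1 + C\epsilon_0 < M^2$, and by continuity of $\psi$ this forces $x_* = \infty$. The same Sobolev bound then delivers (\ref{bound-supremum}), and an analogous local $H^1$ estimate on $(-\infty,R)$ (where $\|\psi-\phi\|_{L^2(-\infty,R)}$ and $\phi$ itself are controlled directly) gives $\psi \in L^\infty(\R)$. The main obstacle is precisely the circular dependence described above, caused by the degeneration of the weight $W_R$ at $+\infty$; the continuity argument anchored at $x=R$ is the natural way to close it.
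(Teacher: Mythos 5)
Your proposal is correct, and its skeleton coincides with the paper's: the same Gagliardo--Nirenberg-type inequality $\|\eta\|^2_{L^\infty(R,\infty)} \leq \eta^2(R) + 2\|\eta\|_{L^2(R,\infty)}\|\eta'\|_{L^2(R,\infty)}$, the same control of $\|\eta\|_{L^2(R,\infty)}$ and of the weighted term $\|\phi'(\psi-\phi)\|_{L^2(R,\infty)}$ by $\rho_R$ via $\phi'=\phi(1-\phi^2)$, and the same bound on $\eta(R)$ coming from the $H^1$ control to the left of $R$. The only genuine difference is how the circular dependence on $\|\psi\|_{L^\infty(R,\infty)}$ is closed. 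You run a continuity/bootstrap argument: fix $M=2$, define $x_*$ as the maximal point up to which $|\psi|\leq M$, estimate $\eta'$ on $(R,x_*)$, and conclude $x_*=\infty$ from the resulting smallness of $\eta$; this is valid (it uses continuity of $\psi\in H^1_{\rm loc}$ and the strict inequality at $x_*$ to push past any finite $x_*$). The paper instead closes the loop algebraically: from the pointwise identity $(\phi+u)^2+v^2=\phi^2+\eta$ one gets $\|u+iv\|^2_{L^\infty(R,\infty)} \leq C(1+\|\eta\|_{L^\infty(R,\infty)})$, and substituting this into the Sobolev inequality yields the self-consistent relation $\|\eta\|^2_{L^\infty(R,\infty)} \leq C\bigl(1+\|\eta\|^{1/2}_{L^\infty(R,\infty)}\bigr)\epsilon_0^2$, which by itself forces $\|\eta\|_{L^\infty(R,\infty)}\leq C\epsilon_0$ for small $\epsilon_0$, with no continuation-in-$x$ argument needed. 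Both routes give the stated bound and the $L^\infty(\R)$ conclusion; the paper's version is slightly more economical, while yours is the more standard bootstrap formulation and makes the source of the circularity explicit.
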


\begin{proof}
Since $\rho_R(\psi,\phi) \geq \| \psi - \phi \|^2_{H^1(-\infty,R)}$, Sobolev's embedding of $H^1(-\infty,R)$ into $L^{\infty}(-\infty,R)$ implies that there is a constant $C > 0$ such that 
$$
\| \psi - \phi \|_{L^{\infty}(-\infty,R)} \leq C \sqrt{\rho_R(\psi,\phi)} \leq C \epsilon_0,
$$
where we have used (\ref{bound-apriori}) for the last inequality. 
If we represent $\psi = \phi + u + iv$ with some real $(u,v) \in \mathcal{H}_R^1 \times \mathcal{H}_R^1$, then 
$\eta = 2 \phi u + u^2 + v^2$ and the triangle inequality implies 
\begin{equation}
    \label{inequality-tech}
\| \eta \|_{L^{\infty}(-\infty,R)} \leq 2 \| u \|_{L^{\infty}(-\infty,R)} + \|u\|^2_{L^{\infty}(-\infty,R)} + \| v \|^2_{L^{\infty}(-\infty,R)} 
\leq 2 C \epsilon_0 + 2C^2 \epsilon_0^2 \leq 4 C \epsilon_0,
\end{equation}
where the last bound holds if $\epsilon_0 > 0$ is sufficiently small. 

To control $\| \eta \|_{L^{\infty}(R,\infty)}$, we use the bound 
$$
\| \eta \|^2_{L^{\infty}(R,\infty)} \leq \eta^2(R) + 2 \| \eta \|_{L^2(R,\infty)} \| \eta' \|_{L^2(R,\infty)}.
$$
Differentiating $\eta = 2 \phi u + u^2 + v^2$, we obtain 
\begin{align*}
    \| \eta' \|_{L^2(R,\infty)} &\leq 2 \left( \| \phi' u \|_{L^2(R,\infty)} + \| \phi u' \|_{L^2(R,\infty)} 
    + \| u u' \|_{L^2(R,\infty)} + \| v v' \|_{L^2(R,\infty)} \right) \\
    &\leq 2 \left( \| (1-\phi^2) u \|_{L^2(R,\infty)} + (1 + \| u \|_{L^{\infty}(R,\infty)}) \| u' \|_{L^2(R,\infty)} + \| v \|_{L^{\infty}(R,\infty)} \| v'\|_{L^2(R,\infty)} \right).
\end{align*}
Since $\rho_R(\psi,\phi) \geq \| \sqrt{1-\phi^2} (\psi-\phi) \|^2_{L^2(R,\infty)} + \| \psi' - \phi' \|^2_{L^2(R,\infty)} + \| \eta \|^2_{L^2(R,\infty)}$, we obtain 
\begin{align*}
\| \eta \|^2_{L^{\infty}(R,\infty)} &\leq \eta^2(R) + 4 \| \eta \|_{L^2(R,\infty)} (1 + \| u + iv \|_{L^{\infty}(R,\infty)}) (\| \sqrt{1-\phi^2} u \|_{L^2(R,\infty)} + \| u' + iv' \|_{L^2(R,\infty)}) \\
&\leq \eta^2(R) + 4 (1 + \| u + iv \|_{L^{\infty}(R,\infty)}) \rho_R(\psi,\phi).
\end{align*}
It follows from $(\phi + u)^2 + v^2 = \phi^2 + \eta$ that there is $C > 0$ such that 
$$
\| u + i v\|^2_{L^{\infty}(R,\infty)} \leq C (1 + \| \eta \|_{L^{\infty}(R,\infty)}).
$$
Since $|\eta(R)| \leq 4 C \epsilon_0$ by (\ref{inequality-tech}), the estimate on $\| \eta \|_{L^{\infty}(R,\infty)}$ can be closed as 
$$
\| \eta \|^2_{L^{\infty}(R,\infty)} \leq C (1 + \| \eta \|^{1/2}_{L^{\infty}(R,\infty)}) \epsilon_0^2.
$$
Since $\epsilon_0 > 0$ is small, this bound implies (\ref{bound-supremum}).
\end{proof}

The following proposition gives the decomposition near the two-parameter orbit 
of the kink solution with the profile $\phi$.

\begin{proposition}
    \label{prop-decomposition}
    Let $\psi \in C^0([-t_0,t_0],\mathcal{E})$ be a local solution of the NLS equation (\ref{NLS}) for some $t_0 > 0$ and assume that there is a small fixed $\epsilon_0 > 0$ such that 
    \begin{equation}
\label{bound-bigger}
	\inf_{\alpha,\beta \in \R} \rho_R\left(e^{i \alpha} \psi(t,\cdot + \beta),\phi \right) \leq \epsilon^2_0, \quad t \in [-t_0,t_0]. 
\end{equation}
Then, the solution can be uniquely represented as 
	\begin{equation}
	\label{decomposition}
	e^{i \alpha(t)} \psi(t,\cdot + \beta(t)) =  \phi + u(t,\cdot) + i v(t,\cdot), \quad \langle u, \phi' \rangle_{L^2(\R)} = 0, \quad \langle v, \phi \rangle_{\mathcal{H}_R} = 0, 
	\end{equation}
	for every $t \in [-t_0,t_0]$, with some real $\alpha(t),\beta(t) \in C^0([-t_0,t_0])$ and $u,v \in C^0([-t_0,t_0],\mathcal{H}^1_R)$.
\end{proposition}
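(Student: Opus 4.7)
\textbf{Proof plan for Proposition \ref{prop-decomposition}.} I would carry out a standard modulation argument built on the implicit function theorem, adapted to the metric space $(\mathcal{E}, \rho_R)$. Introduce the map $\Phi : \mathcal{E} \times \mathbb{R}^2 \to \mathbb{R}^2$ defined by
\begin{equation*}
\Phi(\psi, \alpha, \beta) := \begin{pmatrix} \operatorname{Re}\langle e^{i\alpha}\psi(\cdot + \beta) - \phi, \phi' \rangle_{L^2(\R)} \\[4pt] \operatorname{Im}\langle e^{i\alpha}\psi(\cdot + \beta), \phi \rangle_{\mathcal{H}_R} \end{pmatrix}.
\end{equation*}
Writing $u + iv := e^{i\alpha}\psi(\cdot+\beta) - \phi$, the equation $\Phi(\psi, \alpha, \beta) = 0$ is equivalent to the two orthogonality conditions $\langle u, \phi'\rangle_{L^2(\R)} = 0$ and $\langle v, \phi \rangle_{\mathcal{H}_R} = 0$, since $\phi$ is real. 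The pairings make sense on $\mathcal{E}$ because $\phi, \phi'$ are exponentially localized and $\phi \in \mathcal{H}_R$, $\phi' \in L^1(\R) \cap L^2(\R)$.

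Second, I would verify the IFT hypotheses at the reference point $(\phi, 0, 0)$: clearly $\Phi(\phi, 0, 0) = 0$, and a direct differentiation (using that $\phi$ is real) gives
\begin{equation*}
D_{(\beta, \alpha)}\Phi(\phi, 0, 0) = \begin{pmatrix} \|\phi'\|_{L^2(\R)}^2 & 0 \\ 0 & \|\phi\|_{\mathcal{H}_R}^2 \end{pmatrix},
\end{equation*}
which is invertible with strictly positive diagonal entries. The IFT applied in $(\mathcal{E}, \rho_R) \times \mathbb{R}^2$ then yields $\delta_0 > 0$ and a unique continuous map $\psi \mapsto (\alpha_*(\psi), \beta_*(\psi))$ on the $\rho_R$-ball $B_{\delta_0}(\phi) \subset \mathcal{E}$ solving $\Phi(\psi, \alpha_*(\psi), \beta_*(\psi)) = 0$, with $\alpha_*(\phi) = \beta_*(\phi) = 0$.

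Third, I would reduce the time-dependent statement to this local picture. Choosing $\epsilon_0 < \delta_0/4$, the hypothesis (\ref{bound-bigger}) provides, for each $t \in [-t_0, t_0]$, approximate parameters $\tilde\alpha(t), \tilde\beta(t)$ such that $\tilde\psi(t) := e^{i\tilde\alpha(t)}\psi(t, \cdot + \tilde\beta(t)) \in B_{\delta_0}(\phi)$. Defining $\alpha(t) := \tilde\alpha(t) + \alpha_*(\tilde\psi(t))$ and $\beta(t) := \tilde\beta(t) + \beta_*(\tilde\psi(t))$, composition of the rotation-translation actions yields the desired decomposition (\ref{decomposition}), with $u, v \in \mathcal{H}_R^1$ following directly from the definition of $\rho_R$ (the first term of which controls $\|u'+iv'\|_{L^2}$ and the second of which controls $\|u+iv\|_{\mathcal{H}_R}$). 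Continuity of $\alpha, \beta$ in $t$ follows from the local uniqueness in the IFT together with the continuity of $t \mapsto \psi(t)$ in $(\mathcal{E}, \rho_R)$.

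The main technical obstacle I anticipate is proving the joint continuity, and $C^1$ smoothness in $(\alpha,\beta)$, of $\Phi$ on $(\mathcal{E}, \rho_R) \times \mathbb{R}^2$. The translation $\beta \mapsto \psi(\cdot+\beta)$ is not a priori strongly continuous in the weighted space $\mathcal{H}_R$ since the weight is asymmetric across $x = R$; however, since $\Phi$ pairs $\psi$ only against the rapidly-decaying functions $\phi$ and $\phi'$, a change of variable $y = x+\beta$ converts the $\beta$-dependence into testing $\psi$ against a translated exponentially-localized function, which is smooth in $\beta$ by a standard approximation argument, and the $\alpha$-dependence is trigonometric in $\alpha$ and therefore automatic.
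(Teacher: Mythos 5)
Your proposal is correct and follows essentially the same route as the paper: the same two orthogonality functionals, the same non-degenerate Jacobian with entries $\|\phi'\|_{L^2(\R)}^2$ and $\|\phi\|_{\mathcal{H}_R}^2$, and an implicit/inverse function theorem argument combined with the hypothesis (\ref{bound-bigger}) to place the modulated solution in the domain of validity. The only (cosmetic) difference is that the paper applies the inverse function theorem in $(\alpha,\beta)$ near the attained minimizer $(\alpha_0,\beta_0)$ for each fixed $\psi$, whereas you anchor the IFT at $(\phi,0,0)$ with $\psi$ as a parameter and pre-modulate by near-optimal parameters; the two formulations are equivalent.
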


\begin{proof}
In what follows, we fix $t \in [-t_0,t_0]$ and drop the time variable from the arguments of functions. To incorporate the two constraints in the decomposition (\ref{decomposition}), we introduce the function $F(\alpha,\beta;\psi) : \mathbb{R}^2 \times \mathcal{E} \cap \mathcal{H}_R \to \mathbb{R}^2$ given by 
    $$
    F(\alpha,\beta;\psi) = \left(   \begin{matrix} \langle {\rm Re}(e^{i \alpha} \psi(\cdot + \beta) - \phi), \phi' \rangle_{L^2(\R)} \\ 
    \langle  {\rm Im}(e^{i \alpha} \psi(\cdot + \beta) - \phi), \phi \rangle_{\mathcal{H}_R}
    \end{matrix} \right) 
    $$
    The infimum in (\ref{bound-bigger}) is attained if $\epsilon_0 > 0$ is sufficiently small, since $\alpha$ is defined on a compact interval $[0,2\pi]$  
    due to periodicity of $e^{i \alpha}$ and there is $C_{\infty} > 0$ such that 
    $$
    \lim_{\beta \to \pm\infty} \| \psi - \phi(\cdot - \beta) \|_{H^1(-\infty,R)} \geq C_{\infty}.
    $$
    Let $(\alpha_0,\beta_0) \in \mathbb{R}^2$ be the argument of the infimum in (\ref{bound-bigger}). By the Cauchy--Schwarz inequality, there is $C > 0$ such that $\| F(\alpha_0,\beta_0;\psi) \| \leq C \epsilon_0$ due to (\ref{bound-bigger}) and the exponential decay of $\phi'(x)$ and $1 - \phi^2(x)$ as $x \to +\infty$. 
    The Jacobian of $F(\alpha,\beta;\psi)$ in $(\alpha,\beta)$ is given by 
\begin{align*}   
    D F(\alpha,\beta;\psi) &= \left(   \begin{matrix} \langle -{\rm Im}(e^{i \alpha} \psi(\cdot + \beta)), \phi' \rangle_{L^2(\R)} 
    & \langle {\rm Re}(e^{i \alpha} \psi'(\cdot + \beta)), \phi' \rangle_{L^2(\R)}\\ 
    \langle  {\rm Re}(e^{i \alpha} \psi(\cdot + \beta)), \phi \rangle_{\mathcal{H}_R} & {\rm Im}(e^{i \alpha} \psi'(\cdot + \beta)), \phi \rangle_{\mathcal{H}_R}
    \end{matrix} \right). 
\end{align*}
By using (\ref{bound-bigger}) and the Cauchy--Schwarz inequality again, there is $C > 0$ such that 
\begin{align*}
    \left\| D F(\alpha_0,\beta_0;\psi) - \left(   \begin{matrix} 0 & \langle \phi', \phi' \rangle_{L^2(\R)}\\ 
    \langle  \phi, \phi \rangle_{\mathcal{H}_R} & 0 \end{matrix} \right) \right\| \leq C \epsilon_0,
\end{align*}
where we have used the fact that $\phi$ is real. Since $\| \phi' \|_{L^2(\R)}$ and $\| \phi \|^2_{\mathcal{H}_R}$ are finite and nonzero, 
the Jacobian of $F(\alpha,\beta;\psi)$ is invertible. Furthermore, $F(\alpha,\beta;\psi)$ is smooth with respect to its arguments. 
By the local inverse function theorem, for every $\psi \in \mathcal{E} \cap \mathcal{H}$ satisfying (\ref{bound-bigger}), 
there exists a unique $(\alpha,\beta) \in \R^2$ in a local neighborhood of $(\alpha_0,\beta_0) \in \R^2$ 
such that $F(\alpha,\beta;\psi) = 0$ and the decomposition (\ref{decomposition}) is proven.
\end{proof}

\subsection{Proof of Theorem \ref{th-main}}
\label{sec-2-4}

We are now ready to prove the orbital stability result for the kink with the profile $\phi$ in Theorem \ref{th-main}.

We start by considering a local solution  $\psi \in C^0([-t_0,t_0],\mathcal{E})$ of the NLS equation (\ref{NLS}) satisfying the bound (\ref{bound-bigger}) for some small fixed $\epsilon_0 > 0$. 
By Proposition \ref{prop-decomposition}, the solution can be written in the form (\ref{decomposition}). 
By using coercivity estimates (\ref{coercivity-minus}) and (\ref{coercivity-R}) in Lemmas \ref{lem-operator-minus} and \ref{lem-operator-R}, we get from energy conservation (\ref{energy}) and energy decomposition (\ref{energy-correct}):
\begin{align}	
E(\psi_0) - E(\phi) &= E(\phi + u + i v) - E(\phi) \notag \\
&\geq C (\| u' + i v'\|_{L^2(\R)} + \| u + iv \|^2_{\mathcal{H}_R} + \| \eta \|^2_{L^2(R,\infty)}) \notag \\
& \; - C \left( \| u + i v \|^3_{H^1(-\infty,R)} + \| u + i v \|^6_{H^1(-\infty,R)} + \| \eta \|_{L^{\infty}(R,\infty)} \| \eta \|^2_{L^2(R,\infty)} \right),
\label{energy-control}
\end{align}
for some positive constant $C$. It follows from (\ref{distance}) that 
$$
\| u + i v\|^2_{H^1(-\infty,R)} + \| \eta \|^2_{L^2(R,\infty)} \leq \rho_R(\phi + u + iv, \phi).
$$
By using Proposition \ref{prop-supremum}, as long as the bound (\ref{bound-apriori}) holds,  the bound (\ref{energy-control}) can be rewritten as 
$$
E(\psi_0) - E(\phi) \geq C \rho_R(\phi + u + iv, \phi), 
$$
by reducing the value of $C$ compared to (\ref{energy-control}). It follows from the energy decomposition (\ref{energy-decomp}) that 
$$
E(\psi_0) - E(\phi) \leq C \delta^2,
$$
for some $C > 0$ if $\rho_R(\psi,\phi) \leq \delta^2$ is small. This yields 
$$
\rho_R(e^{i \alpha(t)} \psi(t,\cdot + \beta(t)), \phi) \leq C \delta^2, \quad t \in [-t_0,t_0],
$$
for the solution $\psi \in C^0(\R,\mathcal{E})$ decomposed by (\ref{decomposition}). This defines $\epsilon^2 := C \delta^2 \in (0,\epsilon^2_0)$ 
and justifies the bound (\ref{bound-bigger}) beyond the time interval $[-t_0,t_0]$. Hence, the decomposition (\ref{decomposition}) and the energy bound (\ref{energy-control}) can be extended globally in time for every $t \in \R$. This completes the proof of the bound (\ref{bound-distance}) in Theorem \ref{th-main}.

\section{Numerical verification of the spectral criterion}
\label{sec-3}

Here, we describe a numerical study of the spectral problem \eqref{spectral-problem} and the verification of the sign condition \eqref{E:Fzero-altform}. The spectral problem $\mathcal{L}_R u = \lambda W_R u$  converts, upon setting $\tilde u = W_R^{1/2} u$, to the standard eigenvalue problem $W_R^{-1/2} \mathcal{L}_R W_R^{-1/2} \tilde u = \lambda \tilde u$. 
However, since $W_R(x) \to 0$ exponentially as $x\to +\infty$, the operator $W_R^{-1/2} \mathcal{L}_R W_R^{-1/2}$ is singular as $x\to +\infty$, which may lead to numerical instabilities. To avoid the singular behavior, we will describe a change-of-variable procedure.   

Recall the exact form of $\phi(x)$ given by \eqref{exact-kink}. We change variables from $x$ to $z$, where
\begin{equation} \label{E:xtoz}
z = \frac{x-\ln(2\cosh x)}{2} \quad \implies \quad \frac{dz}{dx} = \frac{1-\tanh x}{2} = 1-\phi^2(x)
\end{equation}
The transformation provides a one-to-one mapping $\mathbb{R} \ni x \to z \in (-\infty,0)$, which 
preserves the direction and satisfies the asymptotic expansions
\begin{equation}
\label{E:z-x-asymp}
z \sim \begin{cases} x - \frac12 e^{2x} & \text{as } x\to -\infty, \\ -\frac12 e^{-2x} &\text{as }x\to +\infty.
\end{cases}
\end{equation}
The transition point $R \in \mathbb{R}$ converts via \eqref{E:xtoz} to a corresponding point $z_R \in (-\infty,0)$.
The spectral problem \eqref{spectral-problem} converts to 
\begin{equation}
\label{E:LR-trans}
\tilde{\mathcal{L}}_R v = \lambda \tilde W_R v, \quad 
\tilde{\mathcal{L}}_R = - \frac{d}{dz} (1-\phi^2) \frac{d}{dz} + \tilde V_R, 
\end{equation}
where
\begin{equation*}
\tilde V_R(z) = \begin{cases} \frac{1-12\phi^2+15\phi^4}{1-\phi^2} & \text{if }z<z_R, \\
1-3\phi^2 & \text{if }z>z_R, \end{cases} \,, \qquad \tilde W_R(z) = \begin{cases} \frac{1}{1-\phi^2} & \text{if }z<z_R, \\ \frac{1}{1-\phi^2(R)} & \text{if }z>z_R. \end{cases}
\end{equation*}
Since $1-\phi^2 \sim -2z$ as $z\to 0^-$, the differential equation in \eqref{E:LR-trans} has a regular singular point at $z=0$, while 
$\tilde V_R(z)$ and $\tilde W_R(z)$ have nonzero constant asymptotics as $z\to 0^-$. The general asymptotic form  $\alpha x + \beta$ for solutions 
$u(x)$ of (\ref{spectral-problem}) as $x \to +\infty$ converts via (\ref{E:z-x-asymp}) to the behavior $-\frac12 \alpha \ln (-2z) + \beta$ for solutions $v(z)$ of (\ref{E:LR-trans}) as $z \to 0^-$.  We are, in particular, interested in the case $\alpha=0$ of solutions $v(z)$ with a finite limit as $z\to 0^-$, i.e., satisfying a Neumann boundary condition at this endpoint.



\begin{figure}[htb!]
    \centering
    \includegraphics[width=0.45\linewidth]{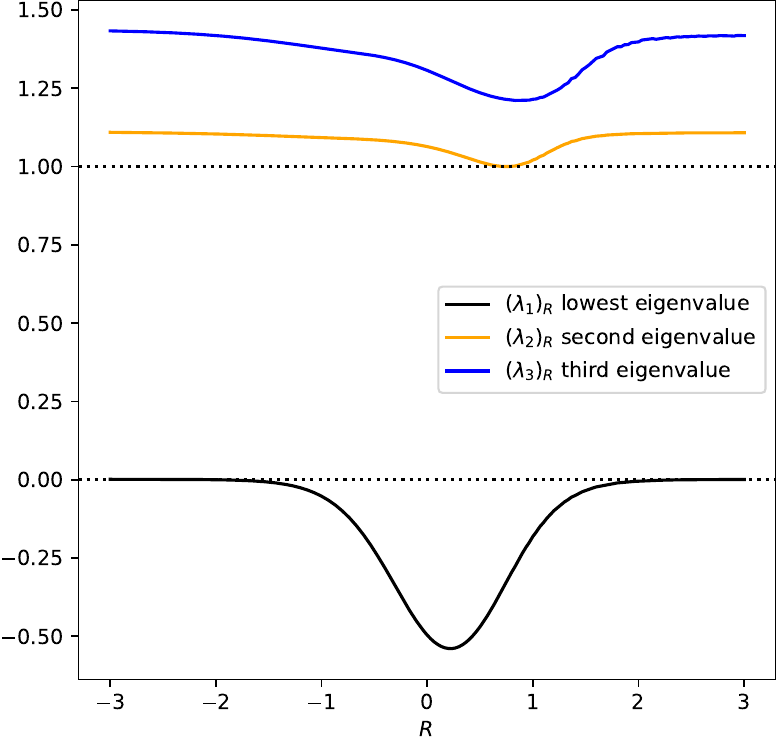}
    \includegraphics[width=0.45\linewidth]{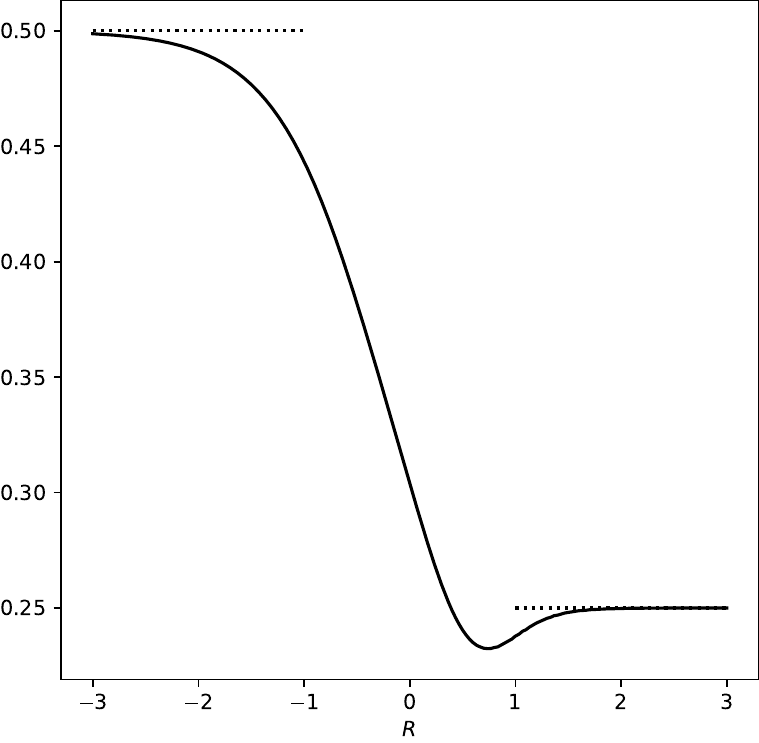}
    \caption{Left: plot of the lowest three numerically observed eigenvalues versus $R$, for the spectral problem \eqref{E:LR-trans}.  Right: plot of $(\lambda_1)_RF_R(0)$ versus $R$, showing that it remains positive and thus $F_R(0)<0$ for all $R$.}
    \label{F:evals_Fzero}
\end{figure}

Next, we solve the spectral problem \eqref{E:LR-trans} numerically using a finite difference method with equal grid spacing implementing Dirichlet boundary conditions at the left (truncated) endpoint $z = -10$ and Neumann boundary conditions at the right endpoint $z = 0$, with $N=20000$ grid points for a grid width $h=0.0005$.  Dirichlet boundary conditions were introduced at the left endpoint by assuming $v(-10-h)=0$ while Neumann boundary conditions were introduced at $z=0$ by assuming that $v(-h) = v(h)$ for a ghost point at $z=h$.  Sparse matrix representation was used with the functions \texttt{eigs}, \texttt{spsolve} in the python libraries \texttt{scipy.sparse.linalg}.

Figure \ref{F:evals_Fzero} (left panel) shows the lowest three eigenvalues of \eqref{E:LR-trans} versus parameter $R$.  The lowest eigenvalue 
$(\lambda_1)_R$ is found to be negative and the other two eigenvalues are found to be $\geq 1$, above the threshold for the continuous spectrum, 
within the range of error tolerance $\sim 10^{-3}$. Figure \ref{F:efuncs} shows the first three eigenfunctions for two values of $R$. We can see 
that the eigenfunctions for the second and third eigenvalues are long sinusoidal waves meeting the Dirichlet boundary condition at $z=-10$. 
This also suggests that these two eigenvalues become a part of the continuous spectrum in the limit as the left endpoint is sent to $-\infty$. 

\begin{figure}[htb!]
\centering
    \includegraphics[width=0.95\linewidth]{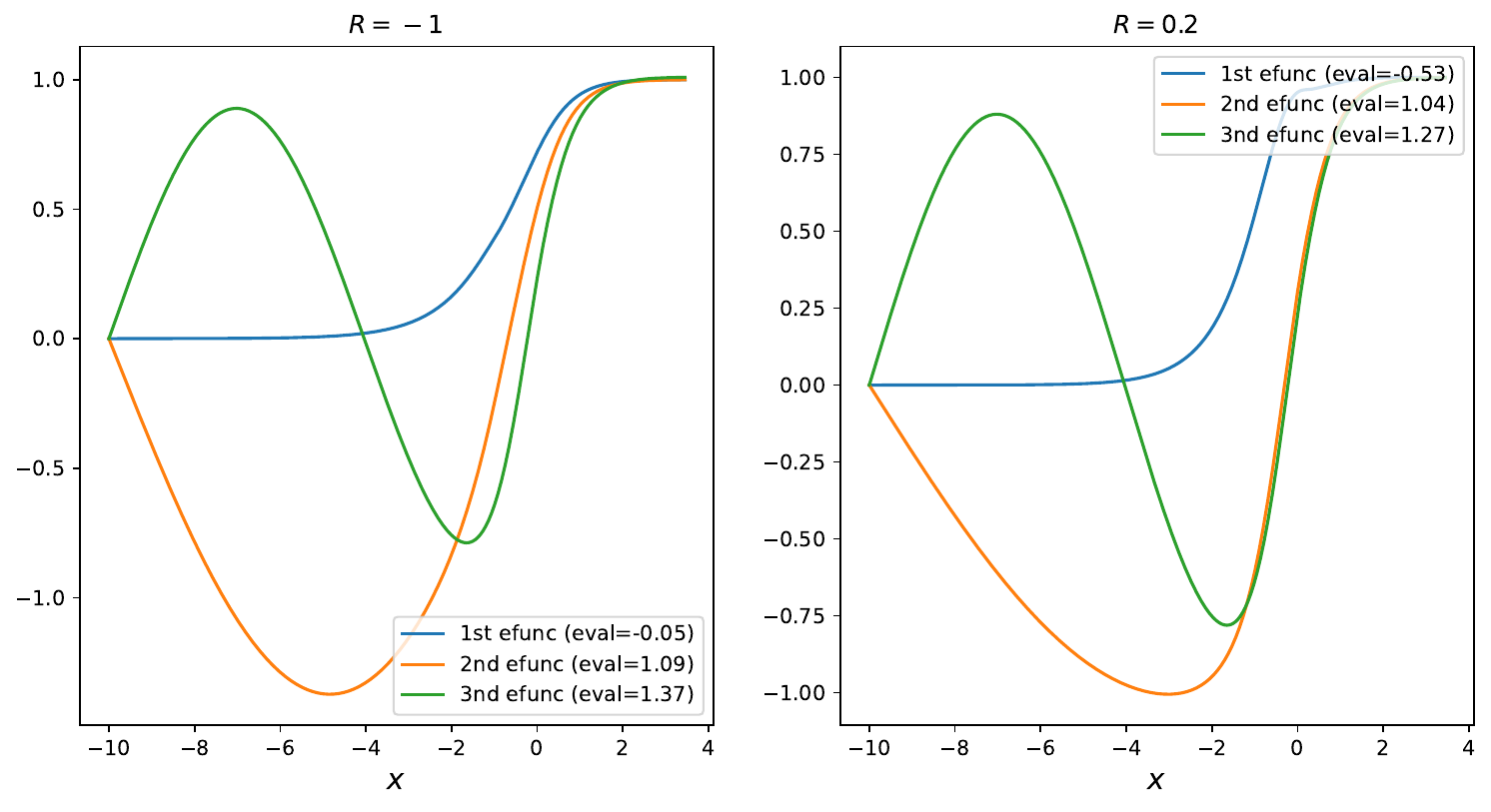}
    \caption{Plots of the first, second and third eigenfunctions obtained numerically for $R=-1$ (left) and $R=0.2$ (right). The horizontal axis is shown as $x$-values, although this computation was performed with the $z$ variable as described in the text.}
    \label{F:efuncs}
\end{figure}

To check \eqref{E:Fzero-altform}, we use 
$$
\frac{dz}{dx} \tilde{\mathcal{L}}_R = \mathcal{L}_R \quad \mbox{\rm and} \quad  \frac{\phi'}{1-\phi^2}=\phi,
$$ 
and obtain
$$
F_R(0) = \langle (\mathcal{L}_R)^{-1} \phi', \phi' \rangle_x = \langle (\tilde{\mathcal{L}}_R)^{-1} \phi, \phi \rangle_z
$$
The value of $F_R(0)$ for each $R$ is computed numerically using the sparse solver together with Simpson's rule for integration.  The results for $(\lambda_1)_RF_R(0)$ are shown in Figure \ref{F:evals_Fzero} (right) for a range of $R$.  It is apparent that $(\lambda_1)_RF_R(0)>0$ for all $R$ and there is a numerically convincing evidence of the limiting values
\begin{equation*}
\lim_{R\to -\infty} (\lambda_1)_RF_R(0) = \frac12 \,, \qquad \lim_{R\to +\infty} (\lambda_1)_RF_R(0) = \frac14.
\end{equation*}
Since $(\lambda_1)_R<0$, this shows that $F_R(0)<0$ for all $R$, completing the verification of \eqref{E:Fzero-altform}.

\section{Generalization for competing power nonlinearity}
\label{sec-4}

We now consider a generalized case of competing nonlinearities bearing
powers $p+1$ and $q+1$ (with the cubic-quintic representing the special case
of $p=2$ and $q=4$).
For the normalization purposes, similarly to the cubic--quintic NLS equation (\ref{NLS}), 
we set the frequency parameter to unity and consider the kink with the profile 
$\phi(x)$ satisfying $\phi(x) \to 0$ as $x \to -\infty$ and $\phi(x) \to 1$ as $x \to +\infty$. This gives uniquely coefficients 
of the competing nonlinearity with two powers $(p,q)$ satisfying $2 \leq p < q$ in the generalized NLS equation:
\begin{equation}
\label{NLS-power}
i \psi_t = \psi_{xx} - \psi + \frac{q (p+2)}{2 (q-p)} |\psi|^p \psi - \frac{p(q+2)}{2(q-p)} |\psi|^q \psi.
\end{equation}
The conserved energy function is given by
\begin{equation}
\label{energy-power}
E(\psi) = \int_{\mathbb{R}} \left[ |\psi_x|^2 + |\psi|^2 \frac{q(1 - |\psi|^p) - p (1 - |\psi|^q)}{q-p} \right] dx,
\end{equation}
and it is defined in the energy space 
\begin{equation}
\label{energy-space-power}
\mathcal{E} = \{ \psi \in H^1_{\rm loc}(\R) : \;\; 
\psi_x \in L^2(\R), \;\; \psi \sqrt{1 - |\psi|^p} \in L^2(\R), \;\; \psi \sqrt{1 - |\psi|^q} \in L^2(\R)\}.
\end{equation}
Local well-posedness of the initial-value problem for (\ref{NLS-power}) in the energy space $\mathcal{E}$ in (\ref{energy-space-power}) 
can be poven with the same method as in Appendix \ref{S:lwp}, although the global continuation of solutions may depend on the powers 
$(p,q)$ of the competing nonlinearity.

The kink profile is obtained from the zero level curve of the first-order invariant
\begin{equation*}
I(\phi,\phi') = (\phi')^2 - \phi^2 \frac{q(1 - \phi^p) - p (1 - \phi^q)}{q-p} = 0,
\end{equation*}
where $q > p$ and $q(1 - \phi^p) - p (1 - \phi^q) \geq 0$ on $(0,\infty)$ with the only minimum at $\phi = 1$.
The linearized system in (\ref{lin-NLS}) is associated with the linearized operators 
\begin{equation*}
\left\{ \begin{array}{l} \mathcal{L}_- = -\partial_x^2 + 1 - \frac{q (p+2)}{2 (q-p)} \phi^p + \frac{p(q+2)}{2(q-p)} \phi^q, \\
\mathcal{L}_+ = -\partial_x^2 + 1 - \frac{q (p+1) (p+2)}{2 (q-p)} \phi^{p} + \frac{p (q+1) (q+2)}{2(q-p)} \phi^{q}, \end{array} \right.   
\end{equation*}
which have the same spectral properties in $L^2(\mathbb{R})$ as in the cubic--quintic case since $\mathcal{L}_- \phi = 0$ 
and $\mathcal{L}_+ \phi' = 0$. By using (\ref{eta-variable}), we obtain the decomposition of the energy (\ref{energy-power}) in the form 
\begin{equation*}
E(\phi + u + i v) - E(\phi) = Q_-(u) + Q_-(v) + R(\eta),
\end{equation*}
where 
$$
Q_-(u) = \int_{\mathbb{R}} \left[ (u')^2 + u^2 - \frac{q (p+2)}{2 (q-p)} \phi^p u^2 + \frac{p(q+2)}{2(q-p)} \phi^q u^2 \right] dx
$$
and 
\begin{align*}    
R(\eta) &= \frac{p}{q-p} \int_{\mathbb{R}} \left[ (\phi^2 + \eta)^\frac{q+2}{2} - \phi^{q+2} - \frac{q+2}{2} \phi^q \eta \right] dx \\
& \quad - \frac{q}{q-p} \int_{\mathbb{R}} \left[ (\phi^2 + \eta)^\frac{p+2}{2} - \phi^{p+2} - \frac{p+2}{2} \phi^p \eta \right] dx.
\end{align*}
If $2 \leq p < q$, then the mapping $L^2(\mathbb{R}) \cap L^{\infty}(\mathbb{R}) \ni \eta \to R(\eta) \in \mathbb{R}$ is $C^2$ 
such that the decomposition can be expanded in the form:
\begin{align*}
R(\eta) = \frac{pq}{8(q-p)} \int_{\mathbb{R}} \left[ (q+2) \phi^{q-2} - (p+2) \phi^{p-2} \right] \eta^2 dx 
\left( 1 + \mathrm{o}(\| \eta \|_{L^{\infty}}) \right).
\end{align*}
By Proposition \ref{prop-supremum}, the remainder term in the decomposition of $R(\eta)$ is much smaller than the leading-order term 
if $\epsilon_0 > 0$ is small in the bound (\ref{bound-apriori}). 
By picking $R \in \mathbb{R}$ such that 
$$
\phi^{q-p}(R) > \frac{p+2}{q+2},
$$
the line $\mathbb{R}$ can be divided into $(-\infty,R)$ and $(R, \infty)$ with the same energy decomposition as in (\ref{energy-correct}). Lemmas 
\ref{lem-operator-minus} and \ref{lem-operator-R} as well as Proposition \ref{prop-decomposition} remain true so that the orbital stability theorem 
of the kink in $\mathcal{E} \cap \mathcal{H}_R$, where $\mathcal{H}_R$ is given by (\ref{Hilbert-space}) and $\mathcal{E}$ is given by (\ref{energy-space-power}), can be proven for the generalized NLS equation (\ref{NLS-power}) verbatim to the proof of Theorem \ref{th-main}. Note that the orbital stability 
result of Theorem \ref{th-main} only uses the local well-posedness of the initial-value problem in the energy space $\mathcal{E}$ since the solution 
is continued globally near the kink orbit by using the energy estimates.

\vspace{0.25cm}

{\it Acknowledgements.} This research was supported by the U.S. National Science Foundation under the award PHY-2408988 (PGK). This research was partly conducted while P.G.K. was 
visiting the Okinawa Institute of Science and
Technology (OIST) through the Theoretical Sciences Visiting Program (TSVP). 
This work was also 
supported by a grant from the Simons Foundation
[SFI-MPS-SFM-00011048, P.G.K]. P.G.K. is also grateful to Professor S.I. Mistakidis and
Drs. Katsimiga and Bougas for numerous constructive discussions on the subject of kink stability.

\appendix

\section{Proof of the local and global well-posedness in $\mathcal{E}$}
\label{S:lwp}

For completeness, we prove the 
well-posedness of the initial-value problem for the cubic--quintic NLS equation \eqref{NLS} 
in the energy space $\mathcal{E}$ given by (\ref{energy-space}). This can be obtained in the following steps.  First, we observe that $\mathcal{E} \subset L^\infty(\mathbb{R})$.  Indeed, 
$$(1-|\psi(x)|^2)^2 - (1-|\psi(x_*)|^2)^2 = -4 {\rm Re} \int_{x_*}^x (1-|\psi(y)|^2) \bar\psi(y) \psi'(y) \, dy$$
and thus
\begin{equation}\label{E:Linf_bound}\left| (1-|\psi(x)|^2)^2 - (1-|\psi(x_*)|^2)^2\right| \leq 4 \|\psi(1-|\psi|^2) \|_{L^2_{(x_*,x)}} \| \psi' \|_{L^2_{(x_*,x)}}
\end{equation}
If $\psi(1-|\psi|^2) \in L^2$, then there is a sequence $x_n \to +\infty$ such that either $\psi(x_n) \to 0$ or $|\psi(x_n)| \to 1$.  In either case, by applying \eqref{E:Linf_bound} with $x_*=x_n$ and sending $n\to \infty$, we obtain a uniform upper bound on $|\psi(x)|$.  This yields $\mathcal{E}\subset L^\infty(\R)$ and in fact further arguments using \eqref{E:Linf_bound} show that either $\lim\limits_{x\to \pm \infty} \psi(x)=0$ or $\lim\limits_{x\to \pm \infty} |\psi(x)|=1$.

Consider the Zhidkov space $X^1$, the Banach space of functions absolutely continuous on $\mathbb{R}$ with norm
$$
\| u \|_{X^1} = \| u \|_{L^\infty} + \| \partial_x u \|_{L^2}
$$
Since $\mathcal{E} \subset L^{\infty}(\R) $, we have that $\mathcal{E} \subset X^1$.  By \cite[Prop. I.2.7]{Zhidkov2001}, the Schr\"odinger semigroup $S(t): X^1\to X^1$ is uniformly bounded on any finite time interval $I$, the function $S(t)\phi:I \to X^1$ is continuous, and $\lim\limits_{t\to 0} S(t) \phi = \phi$, with the limit taken in $X^1$.  For the nonlinear term $f(\psi) = \psi(1-3|\psi|^2)(1-|\psi|^2)$, we have for some $C > 0$:
$$
\|f(\psi)\|_{L^\infty} \leq C \|\psi\|_{L^\infty} (1+\|\psi\|_{L^\infty})^4 \,, \qquad  \|\partial_x f(\psi)\|_{L^2} \leq C \|\psi_x \|_{L^2} (1+\|\psi\|_{L^\infty})^4.
$$
Therefore, a local solution can be obtained as a fixed point in $C^([0,t_0],X^1)$ via the contraction principle for some $t_0 > 0$. This local theory in the larger space $X^1$ can be restricted to $\mathcal{E}$, as follows.  If the initial condition satisfies $\psi(1-|\psi|^2)\in L^2(\R)$, then so does the local solution in $X^1$, by differentiation in time of  $\| \psi(1-|\psi|^2) \|_{L^2}^2$, substitution of the equation, integration by parts, and estimation in $X^1$. This provides a local solution $\psi(t,\cdot)$ in the energy space $\mathcal{E}$. Continuation of the local solution to the global solution $\psi \in C(\mathbb{R},\mathcal{E})$ is due to the global bounds following from the energy conservation.


\begin{thebibliography}{99}


\bibitem{Susanto2025} F. T. Adriano and H. Susanto, ``Maxwell fronts in the discrete nonlinear
Schr\"{o}dinger equations with competing nonlinearities", Stud. Appl. Math. (2025), to be published.

\bibitem{Alejo} M. A. Alejo and A. J. Corcho, ``Orbital stability 
	of the black soliton for the quintic Gross--Pitaevskii equation", 
Rev. Mat. Iberoam. {\bf 40} (2024) 1731--1780.

\bibitem{Baldelli2025NoDEA}
L.~Baldelli, ``On the cubic--quintic Schr\"odinger equation,''
Nonlinear Differ. Equ. Appl. {\bf 32} (2025) 105 (22 pages).

\bibitem{Saut} F. Bethuel, P. Gravejat, J.-C. Saut, and D. Smets, 
``Orbital stability of the black soliton for the Gross-Pitaevskii equation", Indiana Univ. Math. J. {\bf 57} (2008) 2611--2642.
    


\bibitem{Chiron} D. Chiron, ``Stability and instability for subsonic traveling waves of the nonlinear Schr\"{o}dinger equation in dimension one", 
Anal. PDE {\bf 6} (2013) 1327--1420.
    
\bibitem{CPP18} A. Contreras, D.E. Pelinovsky, and M. Plum, ``Orbital stability of domain walls in coupled Gross--Pitaevskii systems", 
SIAM J. Math. Anal. {\bf 50} (2018) 810--833.

\bibitem{crasovan} L.C. Crasovan,  B.A. Malomed, and D. Mihalache,  ``Spinning solitons in cubic-quintic nonlinear media.'', 
Pramana - J Phys {\bf 57} (2001) 1041--1059.

\bibitem{Gallo} L. Di Menza and C. Gallo, ``The black solitons of one-dimensional NLS equations", 
Nonlinearity {\bf 20} (2007) 461--496.

\bibitem{FalcaoFilho2013PRL}
E.~L.~Falc\~ao-Filho, C.~B.~de Ara\'ujo, G.~Boudebs, Herv\'e Leblond, and V.~Skarka,
``Robust two-dimensional spatial solitons in liquid carbon disulfide,''
Phys. Rev. Lett. \textbf{110}  (2013) 013901.

\bibitem{GPII} T. Gallay and D.E. Pelinovsky, ``Orbital stability in the cubic defocusing NLS equation. II. The black soliton",  J. Diff. Eqs. 
{\bf 258} (2015) 3639--3660.


\bibitem{GP25} A. Geyer and D. Pelinovsky, {\em Stability of nonlinear waves in Hamiltonian dynamical systems} Mathematical Surveys and Monographs {\bf 288} (AMS, Providence, 2025).

\bibitem{GS}  P. Gravejat and D. Smets. ``Asymptotic stability of the black soliton for the Gross-Pitaevskii equation''.
Proc. London Math. Soc. {\bf 111} (2015) 305--353.

\bibitem{Katsimiga2023PRA063308}
G.~C.~Katsimiga, S.~I.~Mistakidis, G.~N.~Koutsokostas, D.~J.~Frantzeskakis, R.~Carretero-Gonz\'alez, and P.~G.~Kevrekidis,
``Solitary waves in a quantum droplet-bearing system,''
Phys. Rev. A \textbf{107} (2023) 063308.

\bibitem{KillipOhPocovnicuVisan2017}
R.~Killip, T.~Oh, O.~Pocovnicu, and M.~Vi\c{s}an,
``Solitons and scattering for the cubic--quintic nonlinear Schr\"odinger equation on $\mathbb{R}^3$,''
Arch. Ration. Mech. Anal. \textbf{225} (2017) 469--548.



\bibitem{Luo2021FrontPhys}
Z.-H.~Luo, W.~Pang, B.~Liu, Y.-Y.~Li, and B.~A.~Malomed,
``A new form of liquid matter: Quantum droplets,''
Front. Phys. \textbf{16} (2021) 32201.


\bibitem{mistakidis2024generic}
S.I. Mistakidis, G. Bougas, G.C. Katsimiga, P.G. Kevrekidis,
``Generic transverse stability of kink structures in atomic and optical nonlinear media with competing attractive and repulsive interactions'',
Phys. Rev. Lett. \textbf{134} (2025) 123402.


\bibitem{Mistakidis2023PhysRep}
S.~I.~Mistakidis, A.~G.~Volosniev, R.~E.~Barfknecht, T.~Fogarty, T.~Busch, A.~Foerster, P.~Schmelcher, and N.~T.~Zinner,
``Few-body Bose gases in low dimensions---A laboratory for quantum dynamics,''
Phys. Rep. \textbf{1042} (2023) 1--108.


\bibitem{PP24} D.E. Pelinovsky and M. Plum, ``Stability of black solitons in optical systems with intensity-dependent dispersion", SIAM J. Math. Anal. {\bf 56} (2024) 2521--2568

\bibitem{Petrov2015}
D.~S.~Petrov,
``Quantum Mechanical Stabilization of a Collapsing Bose--Bose Mixture,''
Phys. Rev. Lett. \textbf{115} (2015)  155302.

\bibitem{PetrovAstrakharchik2016}
D.~S.~Petrov and G.~E.~Astrakharchik,
``Ultradilute low-dimensional liquids,''
Phys. Rev. Lett. \textbf{117} (2016)  100401.


\bibitem{Reyna2020PRA}
A.~S.~Reyna, H.~T.~M.~C.~M.~Baltar, E.~Bergmann, A.~M.~Amaral, E.~L.~Falc\~ao-Filho, P.-F.~Brevet, B.~A.~Malomed, and C.~B.~de Ara\'ujo,
``Observation and analysis of creation, decay, and regeneration of annular soliton clusters in a lossy cubic--quintic optical medium,''
Phys. Rev. A \textbf{102} (2020) 033523.

\bibitem{Zhidkov2001}
P.~E.~Zhidkov,
\emph{Korteweg de Vries and nonlinear Schr\"odinger equations: qualitative theory},
Lecture Notes in Mathematics, vol.~1756,
Springer, Berlin, 2001.


\end{thebibliography}
\end{document}